\title{Categorical Data Structures for Technical Computing}
\date{}
\author{Evan Patterson}
\email{evan@epatters.org}
\affiliation{Topos Institute, California, USA}
\author{Owen Lynch}
\email{o.c.lynch@students.uu.nl}
\affiliation{Universiteit Utrecht, Mathematics Department, Utrecht, The Netherlands}
\author{James Fairbanks}
\email{fairbanksj@ufl.edu}
\affiliation{University of Florida, Computer \& Information Science \& Engineering, Florida, USA}
\newcommand*{\C}{\mathsf{C}}
\newcommand*{\D}{\cat{D}}
\newcommand*{\jul}[1]{\mintinline{julia}{#1}}
\newcommand*{\pres}[1]{\langle #1 \rangle}
\DeclareMathOperator{\suc}{succ}
\newcommand*{\sto}{\shortrightarrow}
\newcommand*{\acset}{acset}
\newcommand*{\Acset}{Acset}
\begin{document}
\maketitle

\begin{abstract}
  Many mathematical objects can be represented as functors from finitely-presented categories $\C$ to $\setC$. For instance, graphs are functors to $\setC$ from the category with two parallel arrows. Such functors are known informally as $\C$-sets. In this paper, we describe and implement an extension of $\C$-sets having data attributes with fixed types, such as graphs with labeled vertices or real-valued edge weights. We call such structures \emph{acsets}, short for \emph{attributed $\C$-sets}. Derived from previous work on algebraic databases, acsets are a joint generalization of graphs and data frames. They also encompass more elaborate graph-like objects such as wiring diagrams and Petri nets with rate constants. We develop the mathematical theory of {\acset}s and then describe a generic implementation in the Julia programming language, which uses advanced language features to achieve performance comparable with specialized data structures.
\end{abstract}

\tableofcontents

\section{Introduction}

Practicing data scientists commonly say that they spend at least half their time cleaning data and managing data pipelines, rather than fitting models. The inherent difficulties of data preparation are exacerbated by the multitude of ways in which data can be stored. Data can live in SQL databases, data frames, graphs, or any of the more specialized data structures scattered throughout the data science ecosystem. While each data structure may have its own advantages, it also comes with a new programming interface to learn, and this proliferation fragments the ecosystem and creates challenges for interoperability.

One solution to this problem has been to centralize around a single data structure: the data frame. A data frame is a column-oriented data table, which, unlike a matrix, can have columns with different types. The Python ecosystem has the popular package \texttt{pandas} \cite{mckinney_data_2010}; R has the built-in data type \texttt{data.frame} \cite{r_core_team_r_2020} and its cousin \texttt{tibble} \cite{muller_tibble_2018}. In Julia, many different packages implement the common interface of \texttt{Tables.jl}, which helps somewhat with interoperability.

Underlying all these packages, however, is the same abstract data model, with all its limitations. Importantly, the data frame model fails to capture \emph{relations} between entities, a concept realized in SQL as the \texttt{FOREIGN KEY}. Of course, it is possible to maintain a collection of data frames that refer to each other according to an ad hoc convention, but such relationships are not formalized and thus cannot be manipulated conveniently or robustly through high-level abstractions. As a prototypical example, data frames do not properly capture the structure of a graph, which requires two interlinked data tables, one for vertices and one for edges.

A well-known abstraction that encompasses both data frames and graphs is the relational database. Relational databases have schemas that describe a collection of relations and the foreign keys that link them together. However, relational databases tend to be monolithic systems that are difficult to integrate with general-purpose programming languages. Relational query languages like SQL and logic programming languages like Prolog and Datalog suffer from the Achilles' heel of all standalone domain-specific languages (DSLs): one cannot step outside the DSL without abandoning the system entirely. Yet stepping outside the DSL is often necessary, since even the most expressive query languages have their limitations. Therefore, it becomes a perennial question how much logic to put into the query and how much to put into postprocessing outside the database: a ``two-language problem'' for data processing. Furthermore, databases are typically designed to be treated as global mutable state, making it unnatural to use them as disposable objects in a localized context. For example, graphs can be modeled as databases, but while a program could easily have thousands or millions of graphs extant at a given time, existing in different scopes, one would not wish to achieve this using a million SQL database instances.

Yet there is no essential reason that the idea of ``a collection of tables linked together by a schema, along with indices'' should require monolithic systems, persistence, or global scope. The earliest papers on relational databases already possessed a mathematical model, based on relational algebra and first-order logic, that was independent of any conventions about implementation \cite{codd_relational_1970}. The functional data model, whose basic constructs are entities and functions, was introduced not long afterwards \cite{shipman1981}. In the functional model, functions are primary and relations are replaced by spans of functions, called tabulating spans. Johnson et al formulated a version of this data model category-theoretically, using sketches \cite{johnson2002}. More recently, Spivak recognized that the functional data model can be elegantly reconstructed as a functor from a finitely-presented category to the category of sets and functions \cite{spivak_functorial_2012}. Extensions of Spivak's functorial model to accommodate data attributes were developed later \cite{spivak_wisnesky_2015,schultz_algebraic_2016}.

In this paper, we present an efficient in-memory implementation of categorical databases. Depending on the schema, the resulting data structure can act as a data frame, a graph, or any of a multitude of other structures, previously regarded as too niche to have dedicated implementations. Using advanced features of the Julia programming language, our implementation achieves performance competitive with state-of-the-art graph libraries, despite the fact that our graph library is a thin wrapper around a much more general system.

We call our data structures ``{\acset}s,'' an abbreviation of ``attributed $\C$-sets.'' We define {\acset}s in a direct, practical manner but show that they can be reformulated as well-understood objects from categorical algebra. From this mathematical picture, we derive high-level operations to combine multiple {\acset}s together, query {\acset}s, and translate between {\acset}s based on different schemas. For instance, we can generically compute finite limits and colimits of acsets on a fixed schema. We emphasize, however, that for most purposes, knowledge of category theory is not required to use acsets.

Our impetus for developing {\acset}s originated with the needs of Catlab and other packages in the AlgebraicJulia ecosystem \cite{evan_patterson_2021_5771194}. While implementing various pieces of applied category theory, we realized that many of the data structures we needed were captured, at least partially, by $\C$-sets (copresheaves), which we could implement generically. But this abstraction was not completely satisfactory, because it did not account for attributes: data with a fixed, external meaning such as real numbers or strings of text. This eventually led us to the formalism of {\acset}s and to a more systematic software implementation.

\paragraph{Contributions} Our main contribution is an efficient, flexible implementation of categorical databases as in-memory data structures in a general-purpose programming language, supporting key constructions of applied category theory, including decorated or structured cospans. Our implementation takes advantage of metaprogramming features of the Julia language to attain performance comparable with specialized, state-of-the-art graph libraries, while being far more general. This combination of performance and generality in categorical data structures appears to be novel. In a more theoretical vein, we further explore the design space for categorical databases, introducing a variant of Spivak et al's functorial data model, intermediate in complexity between those in \cite{spivak_wisnesky_2015} and \cite{schultz_algebraic_2016}, and deriving its basic mathematical properties.

\paragraph{Outline} The paper begins, in \cref{section:practice}, with an informal overview of {\acset}s that should be accessible to a general audience of computer scientists and software engineers. We then review the mathematical theory of $\C$-sets, of which {\acset}s are an elaboration, in \cref{section:csets}. Readers familiar with the relevant mathematics may omit this section. In \cref{section:theory}, we develop the theory of attributed $\C$-sets, showing that {\acset}s are slice objects in the category of $\C$-sets and deriving consequences of this fact. Finally, in \cref{section:implementation}, we discuss the implementation of {\acset}s and benchmark it against \texttt{LightGraphs.jl}, a state-of-the-art graph library written in Julia \cite{bromberger_lightgraphsjl_2017}, lending empirical support to our claim that {\acset}s can simultaneously achieve generality and performance.

\section{Using Attributed \texorpdfstring{$\C$}{C}-sets} \label{section:practice}

In this section, we aim to convey an intuitive understanding of {\acset}s. We first explain how two common data structures, data frames and graphs, are special cases of {\acset}s. To illustrate the breadth of the formalism, we also give a short tour of more exotic, yet useful, {\acset}s.

\subsection{Data Frames and Graphs}

As discussed in the Introduction, data frames are a popular answer to the question of how we should store our data. \cref{tab:ex_data frame} shows a tiny data frame with two columns, \texttt{a} and \texttt{b}.

\begin{table}[ht!]
  \centering
  \begin{tabular}{c|c}
    \texttt{a} & \texttt{b} \\\hline
    3  & 0.2 \\
    2  & 0.0 \\
    2  & 0.0 \\
    1  & 0.9
  \end{tabular}
  \caption{Example of a data frame}
  \label{tab:ex_data frame}
\end{table}

For the purposes of this paper, we regard a \emph{data frame} to be a collection of 1-dimensional arrays, called \emph{columns}, all of the same length. A \emph{row} in a data frame consists of the values of all columns at a given integer index. Individual columns can be retrieved by name, and individual rows can be retrieved by index. Data frames are typically stored \emph{column-wise} (as a list of columns) rather than \emph{row-wise} (as a list of rows) to permit efficient data access and iteration.

We will build {\acset}s as an extension of data frames, so we take for granted an implementation of data frames in Julia. In this notional implementation, \jul{DataFrame{(:a,:b),Tuple{Int,Float64}}} is the type of a data frame with two columns named \jul{:a} and \jul{:b} and having the types \jul{Int} and \jul{Float64}. We can construct and inspect the above data frame as follows, accessing the data both row-wise and column-wise.

\begin{minted}{julia}
  > df = DataFrame(a=[1,2,2,3], b=[0.2,0.,0.,0.9]);
  > df.a # first column, called `a`
  [1,2,2,3]
  > df[2,:] # second row
  (a=2,b=0.)
\end{minted}

As motivated in the Introduction, we will eventually wish to regard a graph as two interlinked data frames. However, before turning to that strategy, we consider how graphs are typically implemented. Perhaps the simplest way to implement a graph in Julia would be to use the \emph{edge list} data structure:
\begin{minted}{julia}
struct EdgeList
  vertices::Int
  edges::Int
  src::Vector{Int}
  tgt::Vector{Int}
end
\end{minted}
An object \jul{g} of type \jul{EdgeList} defines a graph, where
\begin{enumerate}
  \item the vertices in the graph are the consecutive numbers \jul{1:g.vertices},
  \item the edges in the graph are the consecutive numbers \jul{1:g.edges}, and
  \item the source of edge \jul{j} is \jul{g.src[j]} and the target of edge \jul{j} is \jul{g.tgt[j]}.
\end{enumerate}
In particular, the invariant \jul{length(g.src) == length(g.tgt) == g.edges} should be maintained. Just as for data frames, we store \jul{src} and \jul{tgt} as two separate vectors, rather than as a single vector of pairs.

A common variant of the edge list is the \emph{adjacency list}. It stores the inverse images of the source and target maps, which, to use the jargon of databases, index the edges incoming to or outgoing from each vertex.
\begin{minted}{julia}
struct AdjacencyList
  vertices::Int
  edges::Int
  src_index::Vector{Vector{Int}}
  tgt_index::Vector{Vector{Int}}
end
\end{minted}
The interpretation of an object \jul{g} of type \jul{AdjacencyList} is that for each vertex \jul{i}, the list of edges with source \jul{i} is \jul{g.src_index[i]} and the list of edges with target \jul{i} is \jul{g.tgt_index[i]}. These indices are useful for traversing the graph, as they enable rapid iteration through the neighbors of any given vertex.

However, in the adjacency list format, it is less convenient to iterate through the edges of the graph and retrieve their sources and targets. Therefore, it can be useful to have both the \jul{src} and \jul{src_index} fields. The trouble now becomes that when modifying the graph, careful bookkeeping is required to ensure that all the fields remain consistent with each other. A useful feature of our implementation of {\acset}s is automatically generating code to deal with such indices. Correct and efficient implementation of this bookkeeping is a significant task when implementing new mutable data structures.

\subsection{Towards Attributed \texorpdfstring{$\C$}{C}-sets} \label{subsection:toward_acsets}

There is an essential difference between graphs and data frames. One can permute the vertices of a graph, and as long as the source and target maps are updated accordingly, the meaning of the graph does not change. However, in a data frame, if one were to exchange every occurrence of the number 6.0 with the number 2.0, then the meaning of the data has changed drastically.

This distinction is crucial to understanding {\acset}s, so it is worth introducing some informal terminology. We call the connectivity data stored in a graph ``combinatorial data'' and the kind of data stored in a data frame ``attribute data.'' Distinctions of this nature are standard in database theory. For example, Johnson et al's EA sketches distinguish between entities and attributes \cite{johnson2002}, with entities being what we call combinatorial data.

In data science and scientific computing, we routinely encounter datasets involving both combinatorial and attribute data. For instance, suppose that we want to represent a network of roads. We could store a graph where each vertex (road junction) has associated $x$ and $y$ coordinates, and each edge (road) has a length, which could be different from the Euclidean distance between its endpoints due to bends and hills. A straightforward implementation of the corresponding data structure might be:
\begin{minted}{julia}
struct MessyRoadMap
  vertices::Int
  edges::Int
  coords::DataFrame{(:x,:y), Tuple{Float64,Float64}}
  lengths::DataFrame{(:length,), Tuple{Float64}}
  src::Vector{Int}
  tgt::Vector{Int}
  src_index::Vector{Vector{Int}}
  tgt_index::Vector{Vector{Int}}
end
\end{minted}
Organizing the fields more systematically, in our implementation of {\acset}s this data structure appears as:
\begin{minted}{julia}
struct OrganizedRoadMap{T}
  tables::NamedTuple{(:vertex,:edge),
    Tuple{
      DataFrame{(:x,:y), Tuple{T,T}},
      DataFrame{(:src,:tgt,:length), Tuple{Int,Int,T}}}}
  indices::NamedTuple{(:vertex,:edge),
    Tuple{
      DataFrame{(:src,:tgt), Tuple{Vector{Int},Vector{Int}}},
      DataFrame{(),Tuple{}}}}
end
\end{minted}
If \jul{i} is the index of a vertex in an object \jul{g} of this reorganized type, then to access \jul{i}'s $x$-coordinate, we write \jul{g.tables.vertex.x[i]}. Similarly, the source of an edge \jul{j} is \jul{g.tables.edge.src[j]}. The edges incoming to a vertex \jul{i} are given by \jul{g.indices.vertex.tgt[i]}.

This way of organizing the data is evidently not at all specific to road maps. The relevant features of the road map are:
\begin{enumerate}
  \item There are two ``combinatorial types'': vertices and edges. We write these in mathematical notation as $V$ and $E$. In an instantiation of a graph, we will assign finite sets to $V$ and $E$.
  \item There are two maps from edges to vertices, which we write as $\src \maps E \to V$ and $\tgt \maps E \to V$.
  \item There is a single ``attribute type'' $T$. In an instance of a road map, we will assign a Julia type to $T$, typically some numeric type.
  \item There are ``data attribute'' maps $x \maps V \to T$, $y \maps V \to T$, and $\mathrm{length} \maps E \to T$.
  \item The maps $\src$ and $\tgt$ are indexed.
\end{enumerate}
Items 1 and 2 comprise the combinatorial data of the road map and items 3 and 4 the attribute data. In our implementation, combinatorial data is always represented by integers, whereas attribute data is represented by type parameters and may be filled by any Julia type. Note that the final item 5 is not like the others; indices are required for efficiency, but if we had omitted them, the data structure would still contain the same logical information. For this reason, when we describe signatures formally, we will omit indices. Instead, the indices can be chosen at compile time based on the anticipated workload. Only keys that are frequently queried should be indexed.

In our implementation in Catlab, we generate a data type like \jul{OrganizedRoadMap} by writing down a formal specification of the schema and then passing this specification to a function that programatically generates the data structure for the corresponding {\acset}:

\noindent\begin{minipage}[t]{\textwidth}
  \vspace{0.5em}
  \begin{minipage}[t]{0.5\textwidth}
\begin{minted}{julia}
@present TheoryRoadMap(FreeSchema) begin
  (V,E)::Ob
  (src,tgt)::Hom(E,V)

  T::AttrType
  (x,y)::Attr(V,T)
  length::Attr(E,T)
end

@acset_type RoadMap(TheoryRoadMap, index=[:src,:tgt]) # creates RoadMap type
\end{minted}
  \end{minipage}%
  \begin{minipage}[t]{0.5\textwidth}
\[\begin{tikzcd}
	E && V \\
	\\
	& {\mathtt{T}}
	\arrow["{\mathrm{tgt}}"', curve={height=6pt}, from=1-1, to=1-3]
	\arrow["{\mathrm{src}}", curve={height=-6pt}, from=1-1, to=1-3]
	\arrow["{\mathrm{y}}", curve={height=-6pt}, from=1-3, to=3-2]
	\arrow["{\mathrm{x}}"', curve={height=6pt}, from=1-3, to=3-2]
	\arrow["{\mathrm{length}}"{description}, from=1-1, to=3-2]
\end{tikzcd}\]
  \end{minipage}%
  \vspace{1em}
\end{minipage}
The elements in the schema typed by \jul{Ob} and \jul{Hom} describe the combinatorial data and the elements typed by \jul{AttrType} and \jul{Attr} describe the attribute data. The schema is also displayed as a diagram to the right of the code.

Graphs and data frames can now be treated as special cases of this machinery, one involving only combinatorial data and the other involving only attribute data. The schema for graphs is:

\noindent \begin{minipage}[t]{\textwidth}
  \vspace{0.5em}
  \begin{minipage}[t]{0.5\textwidth}
\begin{minted}{julia}
@present TheoryGraph(FreeSchema) begin
  (V,E)::Ob
  (src,tgt)::Hom(E,V)
end
\end{minted}
  \end{minipage}%
  \begin{minipage}[t]{0.5\textwidth}
    \[\begin{tikzcd}
        E \ar[rr, "\mathrm{tgt}"', curve={height=6pt}] \ar[rr, "\mathrm{src}", curve={height=-6pt}] & & V
      \end{tikzcd} \]
  \end{minipage}%
  \vspace{1em}
\end{minipage}
The schema for \jul{DataFrame{(:a,:b),Tuple{T,T}}}, a data frame with two columns of the same type, is:

\noindent\begin{minipage}[t]{\textwidth}
  \vspace{0.5em}
  \begin{minipage}[t]{0.5\textwidth}
\begin{minted}{julia}
@present TheoryAB(FreeSchema) begin
  Sample::Ob
  T::AttrType
  (a,b)::Attr(Sample,T)
end
\end{minted}
  \end{minipage}%
  \begin{minipage}[t]{0.5\textwidth}
    \vspace{-1.5em}
    \[\begin{tikzcd}
        \mathbf{Sample} \ar[dd, "\mathrm{a}"', curve={height=6pt}] \ar[dd, "\mathrm{b}", curve={height=-6pt}]\\
        \\
        \mathtt{T}
      \end{tikzcd} \]
  \end{minipage}%
  \vspace{1em}
\end{minipage}

Defining data structures is all well and good, but how does one use them? To illustrate, we show a simple function that generates a road map whose underlying graph is a path graph.

\begin{minted}{julia}
function make_path(coords::Vector{Tuple{Float64, Float64}})
  # Create an empty roadmap
  path = RoadMap{Float64}()

  # This is a convenient function that calculates the Euclidean distance between two
  # vertices in the road map. Notice that we can reference attributes using indexing
  # and that the system knows that these attributes belong to vertices, not edges.
  dist(i,j) = sqrt((path[i,:x] - path[j,:x])^2 + (path[i,:y] - path[j,:y])^2)

  x, y = coords[1]
  # add_part! mutates path to add a part, returning the index of the added part.
  # The named arguments to this function assign the attributes of that part.
  src = add_part!(path, :V, x=x, y=y)

  for i in 2:length(coords)
    x, y = coords[i]
    tgt = add_part!(path, :V, x=x, y=y)
    add_part!(path, :E, src=src, tgt=tgt, length=dist(i,j))
    src = tgt
  end
  path
end
\end{minted}

While there are also higher-level functions on {\acset}s, the low-level accessors and mutators are always available. Moreover, they are \emph{fast}, as demonstrated by the benchmarks in \cref{section:implementation}, so the user is not constrained by what the high-level interface exposes. Coupled with the ability of Julia to make hand-written loops as efficient as ``vectorized'' code, users can easily write high-performance algorithms that are unanticipated by the core library.

\subsection{Beyond Graphs: Wiring Diagrams and Other Graph-like Structures}

Objects similar to graphs but possessing extra or different structure occur frequently in computer science. To create custom data structures for each graph variant by hand would cause an explosion of software complexity, yet without custom data structures, it is not possible to efficiently utilize the extra mathematical structure. In this section, we show that many graph-like structures are unified by the concept of an {\acset} and thus can be manipulated through a uniform, general software interface.

Each of the following three examples are accompanied by figures, which are too large to be displayed inline. The reader is encouraged to contemplate the figures spread over the next several pages before returning to the main text for an explanation.

\begin{figure}
  \centering
  \input{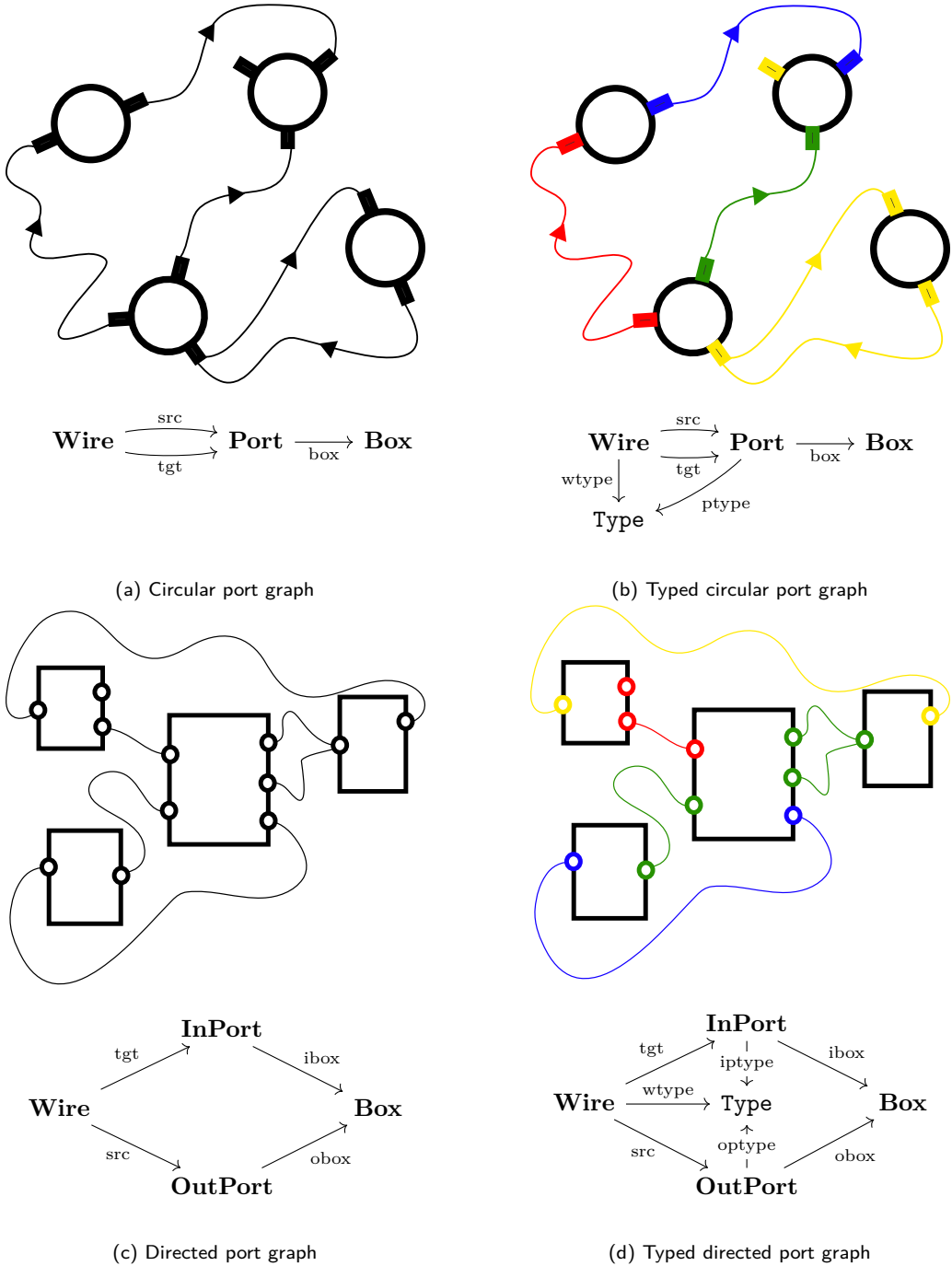}
  \caption{Port graphs}
  \label{fig:port_graphs}
\end{figure}

\begin{figure}
  \centering
  \begin{subfigure}{0.5\textwidth}
  \centering
  \begin{tikzpicture}[scale=0.7, every place/.style={draw=blue,fill=blue!20,thick,minimum size=9mm}, every transition/.style={draw=red,fill=red!20,thick,minimum size=9mm}]
    \node [place] (R) at (-2,0) {};
    \node [place] (W) at (2,0) {};

    \node [transition] (B) at (-4,0) {}
    edge [pre] (R)
    edge [post,bend right] (R)
    edge [post,bend left] (R);

    \node [transition] (P) at (0,0) {}
    edge [pre] (R)
    edge [pre] (W)
    edge [post,bend left] (W)
    edge [post,bend right] (W);

    \node [transition] (D) at (4,0) {}
    edge [pre] (W);
  \end{tikzpicture}
  \[\begin{tikzcd}[column sep=small]
      & {\mathbf{Input}} \\
      {\phantom{\mathbf{Tok}}} & {\mathbf{Species}} & {\mathbf{Transition}} \\
      {\phantom{\mathtt{Type}}} & {\mathbf{Output}} & {\phantom{\mathtt{Rate}}}
      \arrow[from=3-2, to=2-3]
      \arrow[from=1-2, to=2-3]
      \arrow[from=1-2, to=2-2]
      \arrow[from=3-2, to=2-2]
    \end{tikzcd}\]
  \caption{Petri net}
  \label{fig:petri_net_vanilla}
\end{subfigure}%
\begin{subfigure}{0.5\textwidth}
  \centering
  \begin{tikzpicture}[scale=0.7, every place/.style={draw=blue,fill=blue!20,thick,minimum size=9mm}, every transition/.style={draw=red,fill=red!20,thick,minimum size=9mm}]
    \node [place,tokens=3] (R) at (-2,0) {};
    \node [place,tokens=2] (W) at (2,0) {};

    \node [transition] (B) at (-4,0) {}
    edge [pre] (R)
    edge [post,bend right] (R)
    edge [post,bend left] (R);

    \node [transition] (P) at (0,0) {}
    edge [pre] (R)
    edge [pre] (W)
    edge [post,bend left] (W)
    edge [post,bend right] (W);

    \node [transition] (D) at (4,0) {}
    edge [pre] (W);
  \end{tikzpicture}
  \[\begin{tikzcd}[column sep=small]
      & {\mathbf{Input}} \\
      {\mathbf{Tok}} & {\mathbf{Species}} & {\mathbf{Transition}} \\
      {\phantom{\mathtt{Type}}} & {\mathbf{Output}} & {\phantom{\mathtt{Rate}}}
      \arrow[from=3-2, to=2-3]
      \arrow[from=1-2, to=2-3]
      \arrow[from=1-2, to=2-2]
      \arrow[from=3-2, to=2-2]
      \arrow[from=2-1, to=2-2]
    \end{tikzcd}\]
  \caption{Petri net with tokens}
  \label{fig:petri_net_tokens}
\end{subfigure}
\begin{subfigure}{0.5\textwidth}
  \centering
  \vspace{1em}
  \begin{tikzpicture}[scale=0.7, every place/.style={draw=blue,fill=blue!20,thick,minimum size=9mm}, every transition/.style={draw=red,fill=red!20,thick,minimum size=9mm}]
    \node [place,colored tokens={red,blue,yellow}] (R) at (-2,0) {};
    \node [place,colored tokens={red,red}] (W) at (2,0) {};

    \node [transition] (B) at (-4,0) {}
    edge [pre] (R)
    edge [post,bend right] (R)
    edge [post,bend left] (R);

    \node [transition] (P) at (0,0) {}
    edge [pre] (R)
    edge [pre] (W)
    edge [post,bend left] (W)
    edge [post,bend right] (W);

    \node [transition] (D) at (4,0) {}
    edge [pre] (W);
  \end{tikzpicture}
  \[\begin{tikzcd}[column sep=small]
      & {\mathbf{Input}} \\
      {\mathbf{Tok}} & {\mathbf{Species}} & {\mathbf{Transition}} \\
      {\mathtt{Type}} & {\mathbf{Output}} & {\phantom{\mathtt{Rate}}}
      \arrow[from=3-2, to=2-3]
      \arrow[from=1-2, to=2-3]
      \arrow[from=1-2, to=2-2]
      \arrow[from=3-2, to=2-2]
      \arrow[from=2-1, to=3-1]
      \arrow[from=2-1, to=2-2]
    \end{tikzcd}\]
  \caption{Petri net with typed tokens}
  \label{fig:petri_net_typed_tokens}
\end{subfigure}%
\begin{subfigure}{0.5\textwidth}
  \centering
  \vspace{1em}
  \begin{tikzpicture}[scale=0.7, every place/.style={draw=blue,fill=blue!20,thick,minimum size=9mm}, every transition/.style={draw=red,fill=red!20,thick,minimum size=9mm}]
    \node [place,colored tokens={red,blue,green}] (R) at (-2,0) {};
    \node [place,colored tokens={red,red}] (W) at (2,0) {};

    \node [transition] (B) at (-4,0) {1.0}
    edge [pre] (R)
    edge [post,bend right] (R)
    edge [post,bend left] (R);

    \node [transition] (P) at (0,0) {3.0}
    edge [pre] (R)
    edge [pre] (W)
    edge [post,bend left] (W)
    edge [post,bend right] (W);

    \node [transition] (D) at (4,0) {1.2}
    edge [pre] (W);
  \end{tikzpicture}
  \[\begin{tikzcd}[column sep=small]
      & {\mathbf{Input}} \\
      {\mathbf{Tok}} & {\mathbf{Species}} & {\mathbf{Transition}} \\
      {\mathtt{Type}} & {\mathbf{Output}} & {\mathtt{Rate}}
      \arrow[from=3-2, to=2-3]
      \arrow[from=1-2, to=2-3]
      \arrow[from=1-2, to=2-2]
      \arrow[from=3-2, to=2-2]
      \arrow[from=2-1, to=3-1]
      \arrow[from=2-1, to=2-2]
      \arrow[from=2-3, to=3-3]
    \end{tikzcd}\]
  \caption{Petri net with typed tokens and rates}
  \label{fig:petri_net_typed_tokens_and_rates}
\end{subfigure}
  \caption{Whole-grained Petri nets}
  \label{fig:petri_nets}
\end{figure}
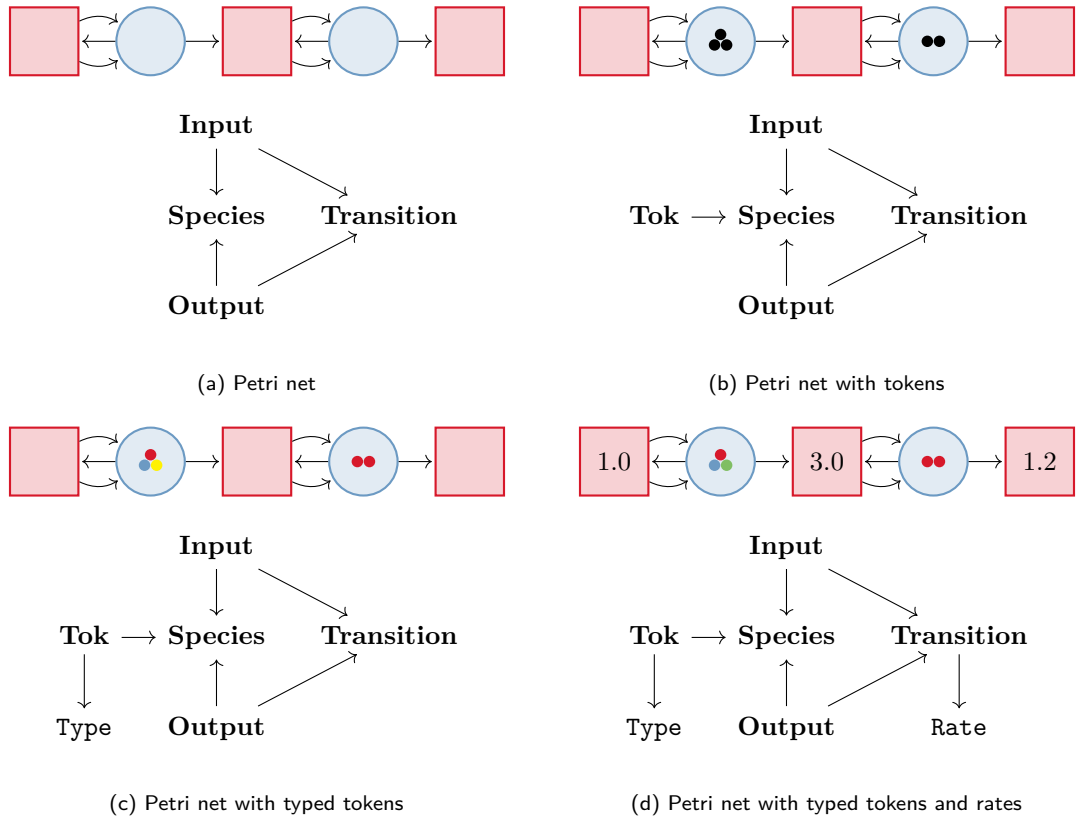

\begin{figure}
  \centering
  \input{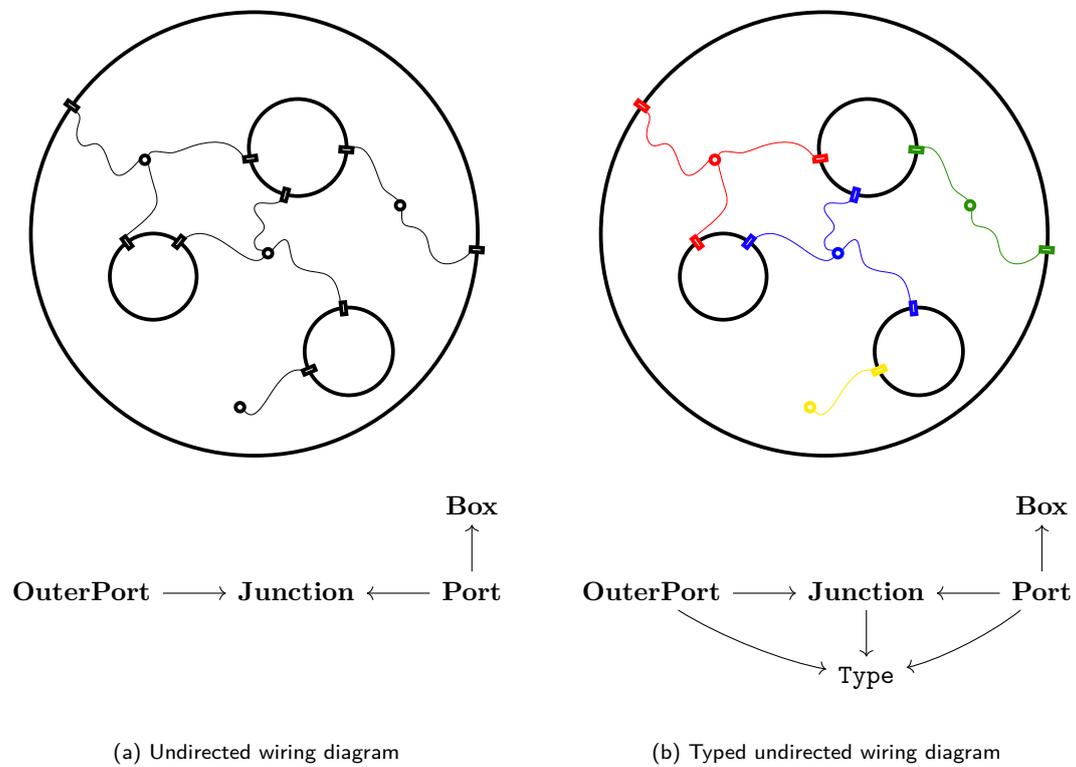}
  \caption{Undirected wiring diagrams}
  \label{fig:undirected_wiring_diagrams}
\end{figure}

\begin{example}
  \cref{fig:port_graphs} shows four different kinds of graphs with ports, or ``port graphs,'' and the generators of their schemas. The port graphs differ along two axes: untyped versus typed and circular versus directed. In a typed port graph, the types of incident ports and wires must agree. This requirement is expressed by the equation
  \[ \mathrm{src} \cmp \mathrm{ptype} = \mathrm{wtype} = \mathrm{tgt} \cmp \mathrm{ptype} \]
  for circular port graphs, and
  \[ \mathrm{src} \cmp \mathrm{optype} = \mathrm{wtype} = \mathrm{tgt} \cmp \mathrm{iptype} \]
  for directed port graphs, which assert that certain triangles in \cref{subfig:typed_port_graph,subfig:typed_directed_port_graph} commute. Untyped port graphs have no such requirement. In a circular port graph, the boxes have only kind of port, $\mathbf{Port}$, although the wires are directed. In a directed port graph, the ports are split into input ports ($\mathbf{InPort}$) and output ports ($\mathbf{OutPort}$), and the wires must go from input ports to output ports. By convention, input ports are drawn on the left and output ports on the right, so that the directions of the wires can be inferred from the incident ports.
\end{example}

\begin{example} \label{ex:petri_nets}
  A \emph{whole-grained Petri net} consists of species, transitions, inputs to transitions, and outputs from transitions \cite{kock_elements_2020}. We visualize Petri nets in \cref{fig:petri_nets}, where, by tradition, the species are drawn as circles and the transitions as squares. Petri nets are often augmented with a set of tokens for each species. This is accomplished by adding a new object $\mathbf{Tok}$ to the schema, along with a map from tokens to species. This trick of representing a many-to-one relationship via a map is a common one when modeling data with relational algebra. Note that the schema for a whole-grained Petri net is isomorphic to the schema for a directed bipartite graph, to which we return in \cref{example:bipartite_graph}.
\end{example}

\begin{example}
  Schemas for typed and untyped \emph{undirected wiring diagrams} are pictured in \cref{fig:undirected_wiring_diagrams}. Loosely speaking, undirected wiring diagrams represent patterns of composition for systems with an outer boundary. Imagine placing an entire wiring diagram inside one of the inner circles, and then erasing the inner circle to get a new wiring diagram. This operation makes undirected wiring diagrams into an operad, a construction made precise in \cite{rupel_operad_2013}.
\end{example}

We hope that these examples have convinced the reader that expressing graph-like data structures as acsets is both natural and general. The remainder of this paper will be mainly concerned with the mathematics and implementation of {\acset}s, so the reader who is primarily interested in the practical use of this technology may proceed directly to the software in Catlab.

\section{Review of \texorpdfstring{$\C$}{C}-sets} \label{section:csets}

Before turning to the theory of attributed $\C$-sets, we review the concept of a $\C$-set, also known as a copresheaf on a category $\C$. In this section and the next, we assume acquaintance with the basic concepts of category theory, namely categories, functors, and natural transformations.

\subsection{Definition and Examples}

\begin{definition}
  Given a small category $\C$, a \emph{$\C$-set} is a functor $F \maps \C \to \setC$. For the purposes of computing, we sometimes restrict to \emph{finite} $\C$-set, which are functors $F \maps \C \to \finSetC$.
\end{definition}

\begin{example}
  Let $\cat{Gr} = \{E \rightrightarrows V\}$ be the category with two objects $E$ and $V$ and two parallel morphisms, $\src,\tgt \maps E \to V$. Then a $\cat{Gr}$-set is a graph.
\end{example}

When implementing mathematics on a computer, everything must represented by a finite amount of data. In the case of $\C$-sets, this means that we work only with \emph{finitely presented} categories $\C$. The next several definitions explain this idea, starting with the simplest way to present a category, as a free category.

\begin{definition}
  There is a forgetful functor $\abs{-} \maps \catC \to \setC^{\cat{Gr}}$ that takes a small category to its underlying graph. We define $\pres{-} \maps \setC^{\cat{Gr}} \to \catC$ to be the left adjoint to this functor, which takes a graph $G$ to the \emph{free category} $\pres{G}$ on that graph. Concretely, an object of $\pres{G}$ is a vertex of $G$ and a morphism $u \to v$ of $\pres{G}$ is a (possibly empty) path in $G$ from vertex $u$ to vertex $v$.
\end{definition}

\begin{definition}
  A \emph{parallel pair} in a category $\C$ is a diagram in $\C$ of shape $\{0 \rightrightarrows 1\}$, or equivalently a pair of morphisms $(f,g)$ in $\C$ such that $\dom f = \dom g$ and $\codom f = \codom g$.
\end{definition}

In the following definition, parallel pairs will represent the equations between morphisms that are imposed in a finitely presented category.

\begin{definition}
  If $G$ is a graph and $R$ is a set of parallel pairs in $\pres{G}$, let $\pres{G|R}$ be the initial category equipped with a map $q_{R} \maps \pres{G} \to \pres{G|R}$, called the \emph{quotient map}, such that $q_{R}(f) = q_{R}(g)$ for all $(f,g) \in R$. We say that the category $\pres{G|R}$ is \emph{presented} by the \emph{generators} in $G$ and \emph{relations} in $R$. In particular, a category $\C$ is \emph{finitely presented} by a graph $G$ and relations $R$ if $G$ and $R$ are both finite and $\C \cong \pres{G|R}$.
\end{definition}

To say that $\pres{G|R}$ is the \emph{initial category} with a quotient map $q_{R} \maps \pres{G} \to \pres{G|R}$ is to say that for any other category $\C$ with a map $q'_{R} \maps \pres{G} \to \C$ such that $q_{R}'(f) = q_{R}'(g)$ for all $(f,g) \in R$, there exists a unique functor $F \maps \pres{G|R} \to \C$ such that $q_{R}' = q_{R} \cmp F$. An explicit construction of the category $\pres{G|R}$ is given by Borceux \cite[Proposition 5.1.6]{borceux1994a}, among other sources.

\begin{example} \label{example:symmetric_graph}
  Consider the graph $G$ given by
  \[
    \begin{tikzcd}
      E \ar[r,shift left,"\src"] \ar[r,shift right,swap,"\tgt"] \ar[loop left,"\inv"] & V
    \end{tikzcd}
  \]
  and the set of parallel pairs
  \[ R = \set{(\inv^{2},1_{E}), (\inv \cmp \src,\tgt), (\inv \cmp \tgt, \src)}. \]
  Then $\cat{SymGr} = \pres{G|R}$ is a finitely presented category such that $\cat{SymGr}$-sets are \emph{symmetric graphs}. In a symmetric graph $F$, every edge $e \in F(E)$ is paired with an edge $F(\inv)(e)$ going in the opposite direction, so that the set $\{e, F(\inv)(e)\}$ can be interpreted as an  ``undirected edge'' between $F(\src)(i)$ and $F(\tgt)(i)$.
\end{example}

\begin{example} \label{example:bipartite_graph}
  Let $\cat{BipartiteGr}$ be the category freely generated by the graph
  \[
    \begin{tikzcd}
      & V_{a} & \\
      E_{ab} \ar[ur,"\src_{a}"] \ar[dr,"\tgt_{b}",swap] & & E_{ba} \ar[ul,swap,"\tgt_{a}"] \ar[dl,"\src_{b}"] \\
      & V_{b}
    \end{tikzcd}.
  \]
  Then a $\cat{BipartiteGr}$-set $F$ is a \emph{bipartite graph} where $F(V_{a})$ and $F(V_{b})$ are the vertices in each partition and $E_{ab}$ and $E_{ba}$ are the edges going from $V_{a}$ to $V_{b}$ and from $V_{b}$ to $V_{a}$, respectively.
\end{example}

\begin{example}
  Let $\cat{Dyn}$ be the category freely generated by
  \[
    \begin{tikzcd}
      X \ar[loop right,"\suc"]
    \end{tikzcd}.
  \]
  Then a $\cat{Dyn}$-set is a discrete dynamical system. When viewed as a monoid, $\cat{Dyn}$ is isomorphic to the natural numbers $(\natural,+,0)$.
\end{example}

\subsection{The Category of \texorpdfstring{$\C$}{C}-sets} \label{subsection:category-c-sets}

Recall that for two categories $\C$ and $\D$, the functor category $\D^{\C}$ is the category whose objects are functors $F \maps \C \to \D$ and whose morphisms are natural transformations $\alpha \maps F \tto G$. The category $\cSetC{\C}$ is simply $\setC^{\C}$ (or $\finSetC^{\C}$ when we restrict our attention to finite $\C$-sets). This category enjoys excellent mathematical properties as a result of being a functor category.

\begin{proposition} \label{prop:pointwise-limits}
  Limits and colimits in $\D^{\C}$ are computed pointwise in $\D$. More precisely, if $J$ is a small category and $\D$ has all (co)limits of shape $J$, then $\D^{\C}$ has all (co)limits of shape $J$. Moreover, the limit of a diagram $K \maps J \to \D^{\C}$ is computed via the formula
  \begin{equation*}
    \left(\lim_{j \in J} K(j)\right)(c) = \lim_{j \in J} (K(j)(c))
  \end{equation*}
  for all $c \in \ob \C$, and similarly for colimits.
\end{proposition}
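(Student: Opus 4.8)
The plan is to prove the statement for limits and then obtain the colimit case by duality. Throughout I fix a diagram $K \maps J \to \D^{\C}$ and write $K(u) \maps K(j) \tto K(j')$ for the natural transformation associated to a morphism $u \maps j \to j'$ of $J$. The guiding principle is that a cone over $K$ in $\D^{\C}$ is the same data as a family of cones over the evaluated diagrams $j \mapsto K(j)(c)$ in $\D$, one for each $c \in \ob\C$, which is natural in $c$; the two universal properties—the limit in $\D$ and naturality in $\C$—must be woven together.

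First I would define the candidate limit $L \maps \C \to \D$ pointwise. For each $c \in \ob\C$, since $\D$ has limits of shape $J$, set $L(c) := \lim_{j \in J} K(j)(c)$ with limiting cone $\pi_j^c \maps L(c) \to K(j)(c)$. To define $L$ on a morphism $f \maps c \to c'$, I observe that the maps $K(j)(f) \cmp \pi_j^c \maps L(c) \to K(j)(c')$ form a cone over $j \mapsto K(j)(c')$: the cone condition follows by combining naturality of $K(u)$ with the cone condition for $L(c)$. The universal property of $L(c')$ then yields a unique $L(f) \maps L(c) \to L(c')$ satisfying
\[ \pi_j^{c'} \cmp L(f) = K(j)(f) \cmp \pi_j^c \quad \text{for all } j. \]
Functoriality of $L$—preservation of identities and composites—follows from the uniqueness clause of this universal property.

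Next I would check that the limiting cones assemble into a cone in $\D^{\C}$. The defining equation above is exactly the naturality square making each $\pi_j$ a natural transformation $L \tto K(j)$, and the cone condition $K(u) \cmp \pi_j = \pi_{j'}$ holds because it holds in each component. For the universal property, I take an arbitrary cone $\sigma_j \maps M \tto K(j)$ in $\D^{\C}$ with apex $M \in \D^{\C}$. Evaluating at each $c$ gives a cone in $\D$, which factors uniquely as $\phi_c \maps M(c) \to L(c)$ with $\pi_j^c \cmp \phi_c = (\sigma_j)_c$. The remaining task is to show $\phi$ is natural: for $f \maps c \to c'$, both $L(f) \cmp \phi_c$ and $\phi_{c'} \cmp M(f)$ become equal after postcomposition with every $\pi_j^{c'}$—using the defining equation for $L(f)$ on one side and naturality of $\sigma_j$ on the other—so they coincide because $L(c')$ is a limit. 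Uniqueness of $\phi$ as a morphism of $\D^{\C}$ follows from the pointwise uniqueness of each $\phi_c$.

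Finally, the colimit statement follows formally by duality, applying the limit result in $(\D^{\C})^{\mathrm{op}} \cong (\D^{\mathrm{op}})^{\C^{\mathrm{op}}}$ together with the fact that $\D$ has colimits of shape $J$ precisely when $\D^{\mathrm{op}}$ has limits of shape $J^{\mathrm{op}}$. I expect no genuine difficulty in this reduction; the only steps requiring real care are the construction of $L$ on morphisms and the verification that the comparison map $\phi$ is natural, since these are where the interaction between the limit universal property in $\D$ and functoriality over $\C$ is concentrated. Everything else is bookkeeping with the uniqueness clauses of universal properties.
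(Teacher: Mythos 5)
Your proof is correct and is precisely the standard argument: the paper itself does not prove this proposition but cites it as a standard result (Awodey, Sections 8.5--8.6; Riehl, Proposition 3.3.9), and your pointwise construction of the limit functor, verification of the universal property via the comparison map $\phi$, and reduction of colimits to limits by duality is exactly the proof given in those references.
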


The proof of this standard result can be found in \cite[Sections 8.5-8.6]{awodey_category_2010} or \cite[Proposition 3.3.9]{riehl_category_2016}. Because $\setC$ has all (small) limits and colimits, and $\finSetC$ has all finite limits and colimits, the proposition establishes that $\cSetC{\C} = \setC^{\C}$ and $\finSetC^{\C}$ have the corresponding limits and colimits. The computational content of the proposition is illustrated by the following examples.

\begin{example}
  The product $G_1 \times G_2$ of graphs $G_1$ and $G_2$ has edge set
  \[(G_{1} \by G_{2})(E) = G_1(E) \by G_2(E) \]
  and vertex set
  \[(G_{1} \by G_{2})(V) = G_1(V) \by G_2(V). \]
  Both equations follow from the formula in \cref{prop:pointwise-limits}. The edge $(e,e') \in (G_1 \by G_2)(E)$ in the product graph has source vertex $(G_{1}(\src)(e),G_{2}(\src)(e'))$, and similarly for the target.
\end{example}

\begin{example}
  The coproduct of two discrete dynamical systems $D_{1}$ and $D_{2}$ has state space
  \[ (D_{1} \sqcup D_{2})(X) = D_{1}(X) \sqcup D_{2}(X). \]
  The successor map $(D_{1} \sqcup D_{2})(\suc)$ is defined by
  \[
    (D_{1} \sqcup D_{2})(\suc)(x) = \begin{cases}
      D_{1}(\suc)(x) & \qif* x \in D_{1}(X) \\
      D_{2}(\suc)(x) & \qif* x \in D_{2}(X)
    \end{cases}.
  \]
\end{example}

We are interested in limits and colimits because applications of $\C$-sets frequently involve operations that can be expressed using limits or colimits; we will see later that being able to compute pushouts is essential for composing structured cospans. The pointwise formula leads to a generic algorithm for computing limits and colimits in functor categories, which we have implemented for finite $\C$-sets.

\subsection{Queries and Data Migration}

Given two schemas $\C$ and $\D$ along with a functor $f \maps \C \to \D$, the induced \emph{pullback functor} $f^{\ast} \maps \setC^{\D} \to \setC^{\C}$ from $\D$-sets to $\C$-sets is given by precomposition with $f$.

\begin{example} \label{ex:vertex_map}
  Let $\terminalC = \{\ast\}$ be the terminal category and let $f \maps \terminalC \to \cat{Gr}$ be the functor sending $\ast$ to $V$. Then $f^{\ast} \maps \setC^{\cat{Gr}} \to \setC^{\terminalC} \cong \setC$ is the forgetful functor sending a graph to its set of vertices.
\end{example}

\begin{example}
  Let $\iota$ be the inclusion functor from the category $\cat{Gr}$ into $\cat{SymGr}$. The pullback functor $\iota^{\ast}$ sends a symmetric graph to its underlying graph, forgetting the involution on edges.
\end{example}

For any functor $f$, the pullback functor $f^*$ has left and right adjoints, denoted $\Sigma_{f}$ and $\Pi_{f}$.
\[\begin{tikzcd}
	{\setC^{C}} && {\setC^{D}}
	\arrow[""{name=0, anchor=center, inner sep=0}, "{f^\ast}"{description}, from=1-3, to=1-1]
	\arrow[""{name=1, anchor=center, inner sep=0}, "{\Sigma_f}", curve={height=-18pt}, from=1-1, to=1-3]
	\arrow[""{name=2, anchor=center, inner sep=0}, "{\Pi_f}"', curve={height=18pt}, from=1-1, to=1-3]
	\arrow["\dashv"{anchor=center, rotate=-90}, draw=none, from=1, to=0]
	\arrow["\dashv"{anchor=center, rotate=-90}, draw=none, from=0, to=2]
\end{tikzcd}\]
The simplest way to think about these adjunctions is through the natural bijections they define between hom sets. That is,
\begin{align*}
  \Hom(X, f^{\ast}(Y)) &\cong \Hom(\Sigma_{f}(X), Y) \\
  \Hom(f^{\ast}(Y),X) &\cong \Hom(Y,\Pi_{f}(X))
\end{align*}
for all $\C$-sets $X$ and $\D$-sets $Y$.

Returning to \cref{ex:vertex_map}, we can use these isomorphisms between hom sets to see that $\Sigma_{f}(X)$ is the graph with vertex set $X$ and no edges (the \emph{discrete} graph), and $\Pi_{f}(Y)$ is the graph with vertex set $Y$ and precisely one edge between each pair of vertices (the \emph{codiscrete} graph).

\begin{example} \label{ex:reflexive_graph}
  A \emph{reflexive graph} is a graph where every vertex is equipped with a distinguished self-loop; more precisely, it is a $\cat{ReflGr}$-set, where
  \begin{equation*}
    \cat{ReflGr} := \left\langle
    \begin{tikzcd}
      E & V
      \arrow["{\mathrm{src}}", shift left=2, from=1-1, to=1-2]
      \arrow["{\mathrm{tgt}}"', shift right=2, from=1-1, to=1-2]
      \arrow["{\mathrm{refl}}"{description}, from=1-2, to=1-1]
    \end{tikzcd}
    \middle|\
    \begin{gathered}
      \mathrm{refl} \cmp \mathrm{src} = 1_V \\
      \mathrm{refl} \cmp \mathrm{tgt} = 1_V
    \end{gathered}
    \right\rangle.
  \end{equation*}
  Starting from the evident inclusion $\iota \maps \cat{Gr} \to \cat{ReflGr}$, the functor $\Sigma_{\iota}$ freely adds reflexive edges to each vertex of a graph. This transformation is useful because the product of two line graphs does not produce a mesh (\cref{fig:nonreflexive_graph_product}), but the product of two reflexive line graphs does (\cref{fig:reflexive_graph_product}).
\end{example}

\begin{figure}[ht]
  \begin{subfigure}{\textwidth}
    \centering
    \includegraphics[scale=0.8]{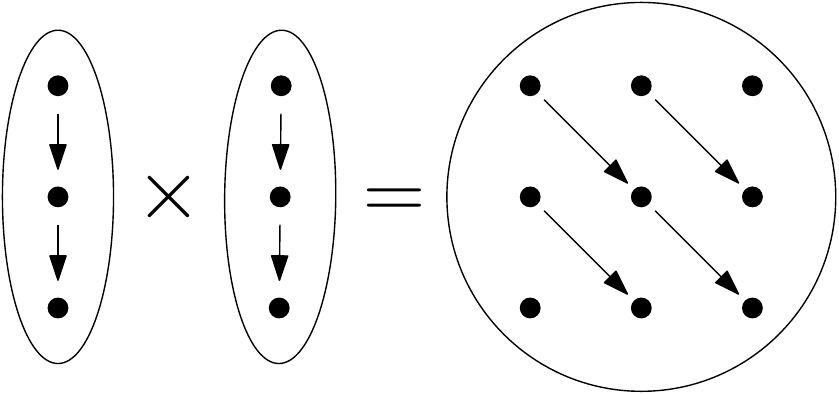}
    \caption{Product of two non-reflexive graphs}
    \label{fig:nonreflexive_graph_product}
  \end{subfigure}
  \begin{subfigure}{\textwidth}
    \centering
    \includegraphics[scale=0.8]{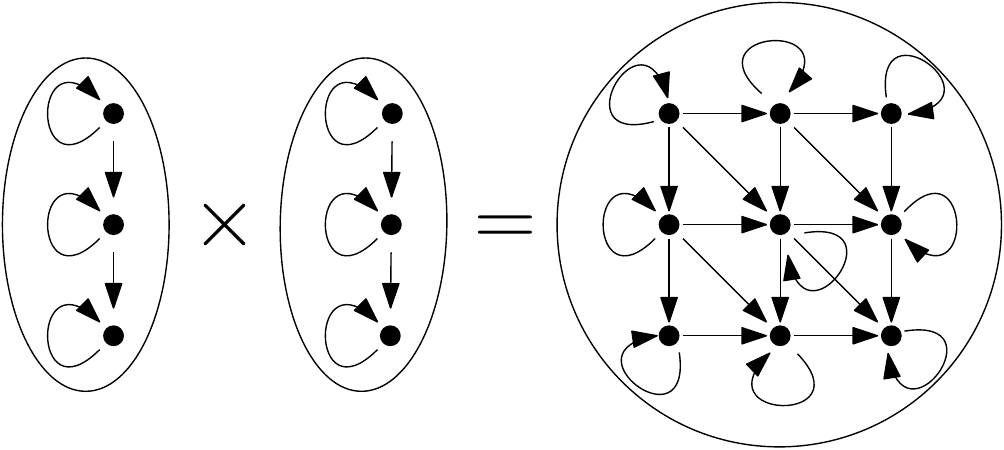}
    \caption{Product of two reflexive graphs}
    \label{fig:reflexive_graph_product}
  \end{subfigure}
  \caption{Categorical products of graphs}
  \label{fig:graph_products}
\end{figure}

For any functor $f$ between finitely presented categories, the pullback functor $f^{\ast}$ is straightforwardly implemented. Computing the pushforward functors $\Sigma_{f}$ and $\Pi_{f}$ can, in principle, be reduced to computing limits and colimits of sets, but the reduction itself can be difficult to compute \cite{bush2003} and the resulting sets can be infinite.

This concludes our brief review of $\C$-sets. $\C$-sets, along with the dual concept of \emph{presheaves} (functors $\C^{\op} \to \setC$), are among the best-studied notions of category theory. A more thorough treatment may be found in textbooks such as \cite{awodey_category_2010,reyes_generic_2004,riehl_category_2016}.

\section{Theory of Attributed \texorpdfstring{$\C$}{C}-sets} \label{section:theory}

In \cref{section:practice}, we contrasted two kinds of information that can be contained in a dataset. The first kind, combinatorial data, is modeled well by $\C$-sets. However, the second kind, attribute data, is not modeled appropriately because the isomorphism class of a $\C$-set abstracts away from the actual elements of the sets. The only information it retains is how the elements are related to each other.

One approach to augmenting $\C$-sets with attribute data, elaborated in \cite{spivak_functorial_2012}, is to consider the slice category $\setC^{\C}/D$ over a fixed $\C$-set $D$. An object of $\setC^{\C}/D$ is a functor $F \maps \C \to \setC$ together with a natural transformation $\alpha \maps F \to D$, whose components $\alpha_c: F(c) \to D(c)$ are assignments of data to the elements of $F(c)$. For instance, data frames can be modeled in this framework. Consider a data frame with column types $X_{1}, \ldots, X_{n}$, which are simply sets. Perhaps $X_{1} = \integer$ and $X_{2} = \real$. Then let $\terminalC = \set{\ast}$ be the terminal category and define the $\terminalC$-set $D$ by $D(\ast) := \prod_{i=1}^{n}X_{i}$. Of course, $\setC^{\terminalC} \cong \setC$, so we have $\setC^{\terminalC}/D \cong \setC/\prod_{i=1}^{n} X_i$. An object of this slice category is a set $U$ together with a function $f \maps U \to \prod_{i=1}^{n} X_{i}$. We interpret it as a data frame by saying that each element $x \in U$ is a row whose $i$th column value is $(\pi_{i} \circ f)(x)$, where $\pi_{i}$ is the $i$th projection function. Note that the morphisms in this category must preserve column values, so an isomorphism class of data frames \emph{does} preserve the attribute data, not just the number of rows.

However, for a general category $\C$ and choice of data attributes, the construction of the $\C$-set $D$ becomes complicated. It is not as simple as sending each object to the product of the attribute types for that object, because morphisms in $\C$ effectively induce extra attributes on their domain objects. For this reason we turn to another definition of {\acset}s, which ultimately reduces to slice categories but is more convenient to work with.

The idea is that we will identify $\C$ with one half of a larger category $\abs{S}$, where the rationale for the notation $\abs{S}$ will become clear shortly. The half of $\abs{S}$ identified with $\C$ specifies the structure of the combinatorial data; the other half specifies the structure of the attribute data. As an example, the schema for decorated graphs is pictured in \cref{fig:dec_graph_schema}. By ``decorated graph,'' we mean a graph whose vertices and edges are decorated with data of a specific type.

\begin{figure}[ht]
  \centering
  \includegraphics[width=0.75\textwidth]{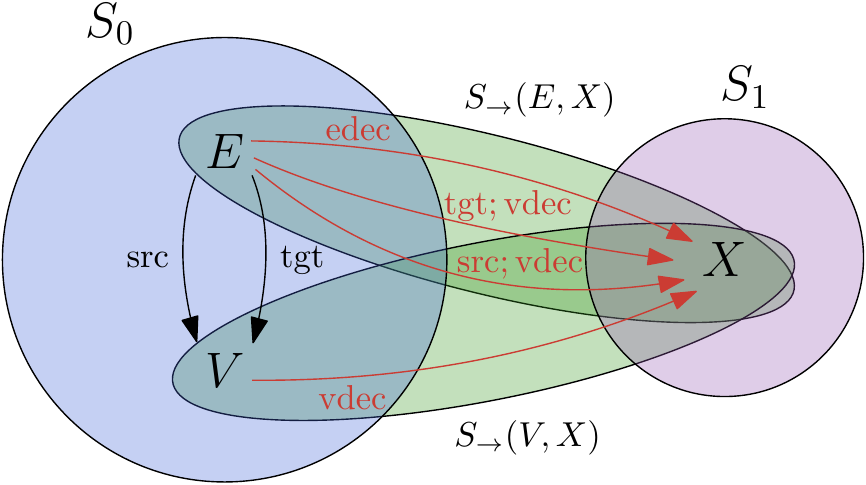}
  \caption{\label{fig:dec_graph_schema} Schema for decorated graphs}
\end{figure}

More formally, we make the following definition.

\begin{definition}
  A \emph{schema} is a small category $\abs{S}$ with a map $S \maps \abs{S} \to \intervalC$, where $\intervalC$ is the interval category $\{0 \to 1\}$, Unless otherwise stated, we assume that $S^{-1}(1)$ is a discrete category.
\end{definition}

We denote the preimages $S^{-1}(0)$ and $S^{-1}(1)$ by $S_{0}$ and $S_{1}$, respectively, and for $c \in S_{0}$ and $X \in S_{1}$, we write $S_{\sto}(c,X) := \Hom_{\abs{S}}(c,X)$. As in the setting of $\C$-sets, we typically require that $\abs{S}$  be finitely presented.

Observe that $S_{\sto}$ defines a functor $S_{0}\op \by S_{1} \to \setC$. This gives an alternative definition of a schema as two categories $S_{0}$ and $S_{1}$ together with a functor $S_{\sto} \maps S_{0}\op \by S_{1} \to \setC$. Such a functor is called a \emph{profunctor} and is denoted by $S_{\sto} \maps S_{0} \pto{} S_{1}$. Loosely speaking, a profunctor is a kind of ``bipartite category.'' The category $\abs{S}$ in the original definition is recovered as the \emph{collage} of the profunctor.

Without further ado, we can define an attributed $\C$-set.

\begin{definition}[Main Definition]
  An \emph{\acset} on a schema $S$ with \emph{typing map} $K \maps S_{1} \to \setC$ is a functor $G \maps \abs{S} \to \setC$ that restricts to $G|_{S_{1}} = K$. It is \emph{finite} if $G$ restricts to a functor $G|_{S_{0}}$ into $\finSetC$.

  A \emph{morphism of {\acset}s} $G_{1}, G_{2} \maps \abs{S} \to \setC$ with typing map $K$ is a natural transformation $\alpha \maps G_{1} \to G_{2}$ such that $\alpha |_{S_{1}}$ is the identity transformation on $K$. The category of $K$-typed {\acset}s on the schema $S$ and morphisms between them is denoted $\acsetC_{K}^{S}$.
\end{definition}

For readers familiar with the double category of profunctors, $\Prof$, an equivalent but perhaps more elegant definition of {\acset}s and {\acset} morphisms is as follows.

\begin{definition}[Alternative Definition] \label{def:acset_profunctor}
  An \emph{\acset} on a schema $S$ with typing $K \maps S_{1} \to \setC$ is a functor $F \maps \C \to \setC$ together with a 2-cell $\alpha$ in $\Prof$ of form
  \[
    \begin{tikzcd}
      S_{0} \ar[r,tick,"S_{\sto}",""{name=top,inner sep=5pt,below}] \ar[d,swap,"F"] & S_{1} \ar[d,"K"] \\
      \setC \ar[r,swap,tick,"\Hom_{\setC}",""{name=bottom,inner sep=5pt,above}] & \setC
      \ar[from=top,to=bottom,Rightarrow,"\alpha"]
    \end{tikzcd}.
  \]

  A \emph{morphism of {\acset}s} $(F,\alpha)$ and $(G,\beta)$ on a schema $S$ with typing $K$ is a natural transformation $\gamma \maps F \tto G$ such that
  \[
    \begin{tikzcd}
      S_{0} \ar[r,tick,"\Hom_{S_{0}}",""{name=top1,inner sep=5pt,below}] \ar[d,swap,"F"] & S_{0} \ar[r,tick,"S_{\sto}",""{name=top2,inner sep=5pt,below}] \ar[d,swap,"G"] & S_{1} \ar[d,"K"] \\
      \setC \ar[r,swap,tick,"\Hom_{\setC}",""{name=bottom1,inner sep=5pt,above}] & \setC \ar[r,swap,tick,"\Hom_{\setC}",""{name=bottom2,inner sep=5pt,above}] & \setC
      \ar[from=top1,to=bottom1,Rightarrow,"\gamma"] \ar[from=top2,to=bottom2,Rightarrow,"\beta"]
    \end{tikzcd} = \begin{tikzcd}
      S_{0} \ar[r,tick,"S_{\sto}",""{name=top,inner sep=5pt,below}] \ar[d,swap,"F"] & S_{1} \ar[d,"K"] \\
      \setC \ar[r,swap,tick,"\Hom_{\setC}",""{name=bottom,inner sep=5pt,above}] & \setC
      \ar[from=top,to=bottom,Rightarrow,"\alpha"]
    \end{tikzcd}.
  \]
\end{definition}

Our definition of an {\acset} is closely related to other definitions of categorical databases with attributes in the literature \cite{spivak_wisnesky_2015,schultz_algebraic_2016}. \cref{def:acset_profunctor} is a simpler but less expressive variant of the ``algebraic databases'' introduced by Schultz et al \cite{schultz_algebraic_2016}, replacing \emph{algebraic profunctors}---product-preserving profunctors into multisorted algebraic theories---with profunctors into discrete categories. This means that our data model excludes operations on data attributes, although of course such operations can still be performed in ordinary Julia code. On the other hand, our definition is slightly richer than that of Spivak and Wisnesky \cite{spivak_wisnesky_2015}, including a notion of \emph{attribute type} rather than treating each attribute as having its own independent data type. This addition is convenient in our implementation, as attribute types correspond to type parameters in the generated Julia data type (see \cref{section:implementation}). In summary, our notion of {\acset} is intermediate in complexity between those in \cite{spivak_wisnesky_2015} and \cite{schultz_algebraic_2016}.

\subsection{\texorpdfstring{$\acsetC_{K}^{S}$}{Acsets} as a Slice Category}

In this section, we show that the category $\acsetC_{K}^{S}$ is a slice category in $\setC^{\C}$, where $\C = S_0$. As a result, $\acsetC_{K}^{S}$ inherits useful categorical properties from $\setC^{\C}$, such as completeness and cocompleteness.

\begin{theorem} \label{thm:main_theorem}
  The category $\acsetC_{K}^{S}$ is isomorphic to the slice category $\setC^{\C}/D$ for some $\C$-set $D$.
\end{theorem}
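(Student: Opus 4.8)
The plan is to construct the $\C$-set $D$ explicitly out of the typing map $K$ and the profunctor $S_\sto$, and then to match up, functorially in $F$, the attribute structures carried by a $\C$-set $F$ with the slice morphisms $F \to D$. Throughout I would use the Alternative Definition (\cref{def:acset_profunctor}), which presents an {\acset} as a $\C$-set $F \maps \C \to \setC$ together with a $2$-cell $\alpha$, that is, a family of functions $\alpha_{c,X} \maps S_\sto(c,X) \to \Hom_\setC(F(c), K(X))$ natural in $c \in S_0$ and $X \in S_1$. Because $S_1$ is discrete, naturality in $X$ is vacuous, so the only surviving condition is naturality in $c$.

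First I would define the candidate $\C$-set $D$ on objects by
\[ D(c) := \prod_{X \in S_1} \Hom_\setC\bigl(S_\sto(c,X),\, K(X)\bigr), \qquad c \in \C = S_0, \]
a genuine product precisely because $S_1$ is discrete; conceptually this is the restriction to $S_0$ of the right Kan extension of $K$ along the inclusion $S_1 \hookrightarrow \abs S$ (in general an end over $S_1$). The step that needs care is the functoriality of $D$: for $g \maps c' \to c$ in $\C$, the contravariant precomposition action $S_\sto(c,X) \to S_\sto(c',X)$, $f \mapsto g \cmp f$, combines with the contravariant exponential to yield a \emph{covariant} map $D(g) \maps D(c') \to D(c)$, sending a family $(h_X)$ to $(f \mapsto h_X(g \cmp f))$. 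Checking that this preserves identities and composites confirms that $D \maps \C \to \setC$ is indeed a $\C$-set.

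Next I would establish the bijection on objects. The universal property of the product, together with the tensor--hom correspondence, turns a single component $\phi_c \maps F(c) \to \prod_{X} \Hom_\setC(S_\sto(c,X), K(X))$ into a family $\alpha_{c,X}(f) \maps F(c) \to K(X)$, and conversely. The crux is that, feeding in the formula for $D(g)$ above, the naturality square for $\phi$ collapses to exactly the identity $\alpha_{c',X}(g \cmp f) = F(g) \cmp \alpha_{c,X}(f)$ that makes $\alpha$ a $2$-cell. This matching of the slice-morphism naturality with the profunctor naturality is the heart of the theorem and the step I expect to be the main obstacle, since it is where the variances and the precomposition action must align; abstractly it is nothing but the universal property of the right Kan extension defining $D$.

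Finally I would upgrade this to an isomorphism of categories. A morphism of {\acset}s $(F,\alpha) \to (G,\beta)$ is a natural transformation $\gamma \maps F \tto G$ satisfying the pasting equation of \cref{def:acset_profunctor}, which in components reads $\gamma_c \cmp \beta_{c,X}(f) = \alpha_{c,X}(f)$; under the object correspondence $\alpha \leftrightarrow \phi$ and $\beta \leftrightarrow \psi$ this is precisely the slice condition $\gamma \cmp \psi = \phi$. Since both assignments leave the underlying $\C$-set $F$ and the transformation $\gamma$ untouched, the functors so defined are mutually inverse, giving an isomorphism $\acsetC_K^S \cong \setC^\C/D$ rather than a mere equivalence. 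Completeness and cocompleteness of $\acsetC_K^S$ would then follow immediately from \cref{prop:pointwise-limits} and the standard fact that a slice of a (co)complete category is (co)complete.
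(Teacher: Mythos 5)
Your proof is correct, and it constructs the same object $D$ as the paper: your formula $D(c) = \prod_{X \in S_1}\Hom_{\setC}\bigl(S_{\sto}(c,X),K(X)\bigr)$ is exactly the pointwise value of the right Kan extension $\Ran_{\tilde p}\tilde K$ that the paper takes as its definition of $D$ (the pointwise limit formula for that Kan extension collapses to this product because $S_1$ is discrete). The difference is in execution. The paper first repackages an acset $G$, in the sense of the main definition, as a pair $(F,\alpha)$ with $\alpha$ a natural transformation over $\el(S_{\sto})\op$, and then obtains the bijection with slice maps $F \tto D$ abstractly from the universal property of the Kan extension, citing its existence from the literature; it never has to verify functoriality of $D$ or the compatibility of the bijection with naturality by hand. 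You instead build $D$ concretely, check its covariance, and verify by direct currying that naturality of $\phi \maps F \tto D$ unwinds to the 2-cell identity $\alpha_{c',X}(g \cmp f) = F(g) \cmp \alpha_{c,X}(f)$ --- which is indeed the crux, and you have the variances right. Your route buys self-containedness (no category of elements, no appeal to Kan-extension existence theorems) and makes the paper's terminal-category example $D = \prod_{\ast \xrightarrow{a} \mathtt{T}} K(\mathtt{T})$ an immediate instance of the general formula rather than a separate observation; the paper's route buys brevity and adapts more readily to non-discrete $S_1$, where your product must be replaced by an end (as you note). One small caveat: the theorem concerns $\acsetC_{K}^{S}$ as given by the main definition, while you start from \cref{def:acset_profunctor}; the translation between the two is precisely the first half of the paper's proof, so a fully self-contained writeup should include it (it is routine: the functions $G(a)$, for $a$ an attribute, assemble into your family $\alpha_{c,X}$, and functoriality of $G$ gives its naturality in $c$).
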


\begin{proof}
  The proof happens in two major steps. The first step is to establish that acsets are fully captured by the diagram in \cref{fig:acset_nat_trans}; the second is to recognize the possibility of constructing $D$ from a certain Kan extension.

  \begin{figure}
    \centering
    \includegraphics{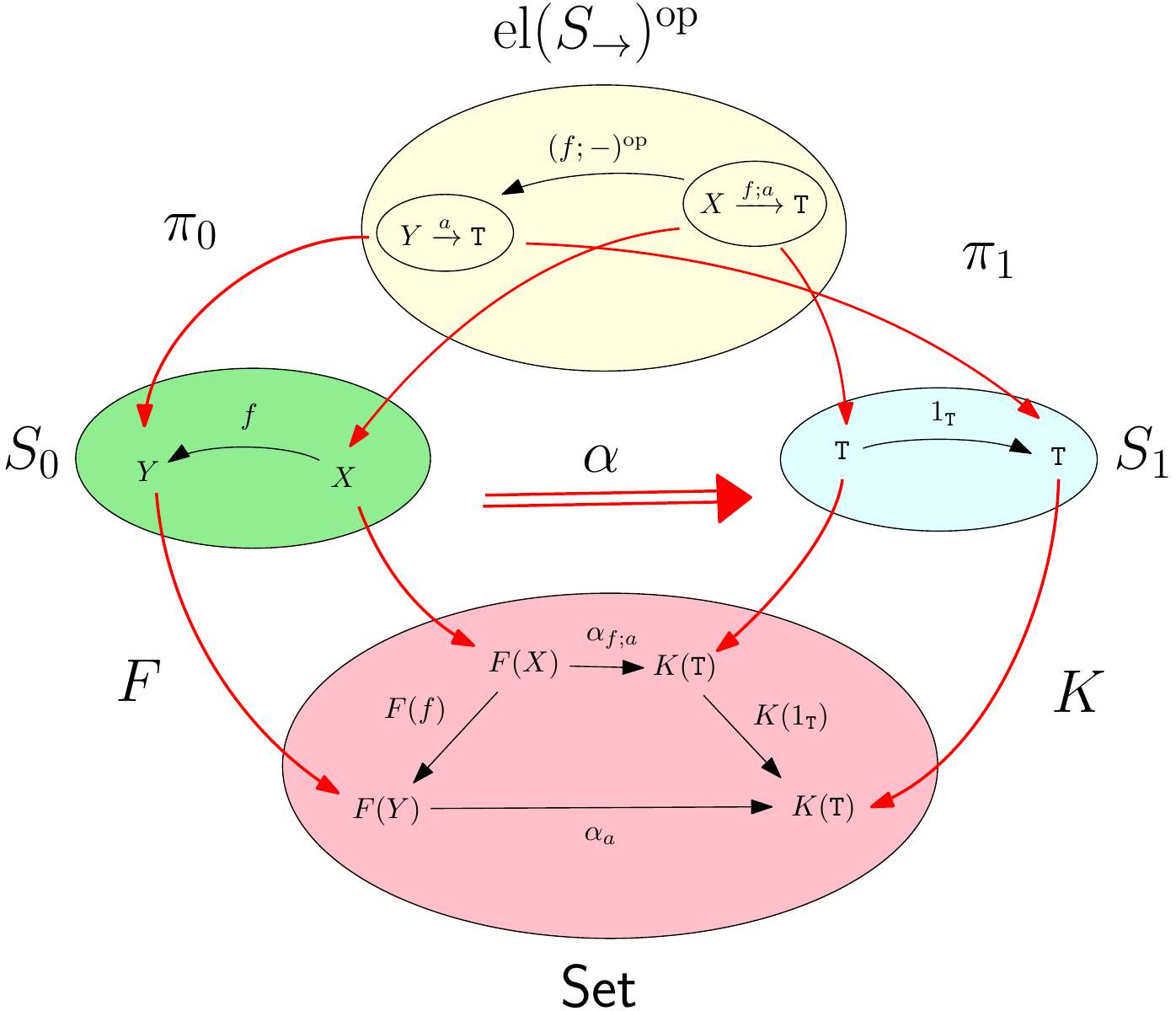}
    \caption{An {\acset} as a natural transformation $\alpha$ from $\pi_0 \cmp F$ to $\pi_1 \cmp K$}
    \label{fig:acset_nat_trans}
  \end{figure}

  Recall the alternative definition of the schema $S$ as a functor $S_{\sto} \maps S_{0}\op \by S_{1} \to \setC$. As $S_{1}$ is discrete, $S_{1} = S_{1}\op$ and we can reinterpret $S_{\sto}$ as a functor $S_{\sto} \maps (S_{0} \by S_{1})\op \to \setC$. Let $\el(S_{\sto})$ be the category of elements of $S_{\sto}$, whose objects are attributes $Y \xrightarrow{a} \mathtt{T}$, and let be $p \maps \el(S_{\sto}) \to (S_{0} \by S_{1})\op$ the canonical projection, sending $Y \xrightarrow{a} \mathtt{T}$ to the pair $(Y,\mathtt{T})$. Then for any {\acset} $G$ in $\acsetC_{K}^{S}$, consisting of a functor $G: \abs{S} \to \setC$ with $G|_{S_{0}} =: F$ and $G|_{S_{1}} = K$, we have a diagram
\[
  \begin{tikzcd}[column sep=small]
    & \el(S_{\sto})\op \ar[ld,swap,"\pi_{0} \circ p"] \ar[rd,"\pi_{1} \circ p"]\\
    S_{0} \ar[rd,swap,"F"] \ar[rr,shorten <=20pt,shorten >=20pt,Rightarrow,"\alpha"] & & S_{1} \ar[ld,"K"]\\
    & \setC
  \end{tikzcd}.
\]
The natural transformation $\alpha$ is defined by letting its component at an element $a \in \el(S_{\sto})\op$, which is an attribute from $(\pi_{0} \circ p)(a) \in S_{0}$ to $(\pi_{1} \circ p)(a) \in S_{1}$, be the function
\[
\alpha_a := G(a) \maps F((\pi_{0} \circ p)(a)) \to K((\pi_{1} \circ p)(a)).
\]
This construction is depicted schematically in \cref{fig:acset_nat_trans}. The naturality of $\alpha$ follows from the functorality of $G$. That is, $\alpha_{f;a} = F(f) ; \alpha_{a}$ because $G(f;a) = G(f) ; G(a) = F(f) ; G(a)$. Moreover, any such natural transformation $\alpha$ defines a functor $G: \abs{S} \to \setC$ with $G|_{S_{1}} = K$. This correspondence gives an isomorphism between the category of {\acset}s and the category of pairs $(F,\alpha)$. In the latter category, the morphisms $(F_{1},\alpha_{1}) \to (F_2,\alpha_{2})$ are natural transformations $F_{1} \to F_{2}$ making a certain diagram commute, so that attributes are preserved. For further intuition, see \cref{def:acset_profunctor} of the category of {\acset}s in terms of the double category of profunctors.

Next, we rewrite the above diagram in a slightly different form:
\[
  \begin{tikzcd}
    \el(S_{\sto})\op \ar[rr,"K \circ \pi_{1} \circ p"] \ar[rd,swap,"\pi_{0} \circ p"] & {} & \setC \\
    & S_0 \ar[ur,swap,"F"] \ar[u,shorten <=5pt, Rightarrow,"\alpha"]
  \end{tikzcd}
\]
Diagrams of this form are precisely the context for right Kan extensions. Letting $\tilde{p} := \pi_{0} \circ p$ and $\tilde{K} := K \circ \pi_{1} \circ p$, the right Kan extension of $\tilde K$ along $\tilde p$, denoted $\Ran_{\tilde{p}} \tilde K$, is known to exist \cite[Corollary 6.2.6]{riehl_category_2016}. By its defining universal property, the natural transformations $\alpha$ in the above diagram are in isomorphism with natural transformations $\hat{\alpha}$ in the diagram below:
\[
  \begin{tikzcd}[row sep=large]
    \el(S_{\sto})^{\op} \ar[rr,"\tilde{K}"] \ar[rd,swap,"\tilde{p}"] & {} & \setC \\
    & S_{0} \ar[ur,bend right=25,swap,"F",""{name=F,inner sep=0.5pt,above}] \ar[ur,bend left=25,"\Ran_{\tilde{p}}\tilde{K}",,""{name=lan,inner sep=0.5pt,below}]
    \ar[Rightarrow,from=F,to=lan,swap,"\hat{\alpha}"]
  \end{tikzcd}
\]

Let $D = \Ran_{\tilde{p}}\tilde{K}$. We have shown that an object of $\acsetC_{K}^{S}$ is precisely a functor $F \maps S_{0} \to \setC$ together with a natural transformation $\hat\alpha \maps F \tto D$. Moreover, tracing carefully through the argument shows that a morphism of {\acset}s is exactly a morphism in the slice category $\setC^{S_{0}}/D$, which completes the proof.
\end{proof}

A similar result is stated by Spivak and Wisnesky \cite[Proposition 9.1.2]{spivak_wisnesky_2015}, although the proof there is more immediate due to differences in formalism.

In the very special case where $\cat{C} = \{*\}$ is the terminal category, the $\cat{C}$-set $D$ in \cref{thm:main_theorem}, which is just a set, is the product $\prod_{* \xrightarrow{a} \mathtt{T}} K(\mathtt{T})$ of the attribute types of all attributes in the schema. This is the example of a data frame discussed at the beginning of the section. In general, however, the right Kan extension $D = \Ran_{\tilde p} \tilde K$ used in the proof is large, complicated, and impractical to instantiate in software. Thus, it should be understood mainly as a theoretical device for understanding the category of {\acset}s.

Indeed, it is useful to know that the category of acsets is a slice category of a presheaf category, as such categories have many desirable features. Because presheaf categories are elementary toposes and slices of toposes are again toposes (the ``fundamental theorem of toposes'' \cite[Theorem 17.4]{mclarty1992}), the category of acsets is a topos. In particular, all finite limits and colimits exist, and can be computed by known formulas. Also, there is a geometric morphism between $\setC^{S_{0}}$ and the category of acsets on $S$.  Thus, in an abstract sense, the important properties of the category of acsets are determined by the above construction and already known; the main innovation is in realizing the many useful applications thus enabled. Much of \cref{section:implementation} will be dedicated to showing how the properties of slice categories of presheaf categories translate into practical capabilities for designing and implementing software.

\subsection{Functorality of the Attributed $\C$-set Construction}

Careful examination of the construction of $\acsetC^{S}_{K}$ shows that it is functorial in both $S$ and $K$.

Functorality in $S$ is given simply by precomposition. A \emph{morphism of schemas} $S \to S'$ is a functor $L \maps \abs{S} \to \abs{S'}$ that restricts to functors $L_{0}: S_{0} \to S_{0}'$ and $L_{1}: S_{1} \to S_{1}'$. Given such a functor $L$ and a typing map $K \maps S'_{1} \to \setC$, there is a functor $\Delta_{L} \maps \acsetC_{K}^{S'} \to \acsetC_{L_{1} \cmp K}^{S}$ which sends $G \maps S' \to \setC$ to $L \cmp G \maps S \to \setC$.

\begin{example}
  Recall the schema for weighted graphs. For this schema, $S_{1} \cong \set{\ast}$ has a single object, so a typing map is simply a choice of a set. Functorality of the {\acset} construction in the typing map allows us to lift a map $\real \to \complex$ to a map from $\real$-weighted graphs to $\complex$-weighted graphs.

  Moreover, the inclusion of the schema for ``weighted sets'' into the schema for weighted graphs, pictured below, induces a morphism from weighted graphs to weighted sets that sends a weighted graph to its weighted set of edges.
  \[\begin{tikzcd}
      \bullet && E & V \\
      {\mathtt{T}} && {\mathtt{T}}
      \arrow[from=1-1, to=2-1]
      \arrow["{\mathrm{tgt}}"', curve={height=6pt}, from=1-3, to=1-4]
      \arrow["{\mathrm{src}}", curve={height=-6pt}, from=1-3, to=1-4]
      \arrow[from=1-3, to=2-3]
      \arrow[curve={height=-12pt}, dotted, maps to, from=1-1, to=1-3]
      \arrow[curve={height=-12pt}, dotted, maps to, from=2-1, to=2-3]
    \end{tikzcd}\]
\end{example}

For functorality in $K$, suppose that $\gamma \maps K \tto L$ is a natural transformation. Then the Kan extension in the proof of \cref{thm:main_theorem} yields a map $\hat{\gamma} \maps P := \Ran_{\pi_{0} \cmp F} (\pi_{1} \cmp K) \to \Ran_{\pi_{0} \cmp F} (\pi_{1} \cmp L) =: Q$. The theory of slices of toposes \cite[Chapter 11]{mclarty1992} then produces three functors:
\[\begin{tikzcd}
	{\setC^{S_0}/P} && {\setC^{S_0}/Q}
	\arrow[""{name=0, anchor=center, inner sep=0}, "{\hat{\Sigma}_{\hat{\gamma}}}", curve={height=-18pt}, from=1-1, to=1-3]
	\arrow[""{name=1, anchor=center, inner sep=0}, "{\hat{\Pi}_{\hat{\gamma}}}"', curve={height=18pt}, from=1-1, to=1-3]
	\arrow[""{name=2, anchor=center, inner sep=0}, "{\hat{\Delta}_{\hat{\gamma}}}"{description}, from=1-3, to=1-1]
	\arrow["\dashv"{anchor=center, rotate=-90}, draw=none, from=0, to=2]
	\arrow["\dashv"{anchor=center, rotate=-90}, draw=none, from=2, to=1]
\end{tikzcd}\]

It is important to note that these three functors are \emph{different} from the three functors given by functorality in $S_{0}$. Namely, these functors do not migrate {\acset}s between different schemas, and instead manipulate the data of an {\acset}.

\begin{example}
  In the case that the schema has only one object, no morphisms, and one attribute, these functors are particularly simple. Let $P$ and $Q$ be sets, and let $\gamma \maps P \to Q$ be any function. We then have adjunctions:
  \[\begin{tikzcd}
      {\setC/P} && {\setC/Q}
      \arrow[""{name=0, anchor=center, inner sep=0}, "{\hat{\Sigma}_{\hat{\gamma}}}", curve={height=-18pt}, from=1-1, to=1-3]
      \arrow[""{name=1, anchor=center, inner sep=0}, "{\hat{\Pi}_{\hat{\gamma}}}"', curve={height=18pt}, from=1-1, to=1-3]
      \arrow[""{name=2, anchor=center, inner sep=0}, "{\hat{\Delta}_{\hat{\gamma}}}"{description}, from=1-3, to=1-1]
      \arrow["\dashv"{anchor=center, rotate=-90}, draw=none, from=0, to=2]
      \arrow["\dashv"{anchor=center, rotate=-90}, draw=none, from=2, to=1]
  \end{tikzcd}\]
  The functors $\hat{\Sigma}_{\hat{\gamma}}$ and $\hat{\Delta}_{\hat{\gamma}}$ have simple descriptions: $\hat{\Sigma}_{\hat{\gamma}}$ composes $X \to[f] P$ with $\gamma$, and $\hat{\Delta}_{\hat{\gamma}}$ pulls back $Y \to[g] Q$ along $\gamma$. To describe $\hat{\Pi}_{\hat{\gamma}}$, it is best to exploit the isomorphism $\setC/P \cong \setC^{P}$, viewing $P$ as a discrete category. For more information about adjoint triples arising from slices, a standard reference is \cite[Chapter 1]{maclane1994}.
\end{example}

\begin{example}
  Consider again weighted graphs. If $G$ is a $\integer$-weighted graph and $f \maps \integer \to \real$ is any function, then the functor $\hat{\Sigma}_{\hat{f}}$ ``maps $f$ over $G$'' to produce a $\real$-weighted graph with the same underlying graph as $G$.

  The other two functors are not as straightforward. When $\gamma$ is injective, the functor $\hat{\Delta}_{\hat{\gamma}}$ can be interpreted as a ``filtering transformation,'' which deletes the parts of the {\acset} having attributes that are not in the domain of $\gamma$. For example, if $f \maps [0,200] \hookrightarrow \real_{\geq 0}$ is the inclusion, then $\hat{\Delta}_{\hat{f}}$ takes a $\real_{\geq 0}$-weighted graph and deletes any edges with weights greater than 200 to produce a $[0,200]$-weighted graph. More generally, when $\gamma$ is not injective, parts can be copied as well as deleted. For the unique function $f \maps \set{0,1} \to \set{\ast}$, the functor $\hat{\Delta}_{\hat{f}}$ maps a graph with weights of singleton type (i.e., an unweighted graph) to the graph where each edge is duplicated, one copy weighted by $0$ and the other by $1$.
\end{example}

In general, $\hat{\Delta}_{\hat{\gamma}}$ performs a combination of these two transformations. Again, $\hat{\Pi}_{\hat{\gamma}}$ is not so intuitive, especially in the context of {\acset}s. The reader is encouraged to try examples for themselves and use the adjointness property to see what $\hat{\Pi}_{\hat{\gamma}}$ must be.

\subsection{Structured Cospans with Attributed $\C$-sets} \label{subsection:structured_cospans}

{\Acset}s are usefully combined with structured cospans, a formalism for open systems introduced by Baez and Courser \cite{baez_structured_2020}, building on Fong's decorated cospans \cite{fong_decorated_2015}. Categories of structured cospans generalize categories of cospans, which we now review.

\begin{definition}
  Given a category $\C$ with pushouts, the \emph{cospan category} $\cospan(\C)$ has the same objects as $\C$ and has as morphisms from $a$ to $b$, the isomorphism classes of cospans $a \to x \ot b$ in $\C$. Morphisms $a \to x \ot b$ and $b \to y \ot c$ are composed by pushout:
  \begin{equation*}
  \begin{tikzcd}
  	&& {x +_b y} \\
  	& x && y \\
  	a && b && c
  	\arrow[from=3-1, to=2-2]
  	\arrow[from=3-3, to=2-2]
  	\arrow[from=3-3, to=2-4]
  	\arrow[from=3-5, to=2-4]
  	\arrow[from=2-2, to=1-3]
  	\arrow[from=2-4, to=1-3]
  	\arrow["\ulcorner"{anchor=center, pos=0.125, rotate=-45}, draw=none, from=1-3, to=3-3]
  \end{tikzcd}
  \end{equation*}
  The identity morphism for $c \in \C$ is the cospan $c \to[1_c] c \ot[1_c] c$.
\end{definition}

The definition involves one technical point, which is that a pushout, like any colimit, is defined only up to canonical isomorphism. The most direct way to solve this problem is to work with isomorphism classes of cospans, rather than with cospans themselves, as done above. Two cospans $a \to[f] c \ot[g] b$ and $a \to[f'] c' \ot[g'] b$ are \emph{isomorphic} if there exists an isomorphism $h \maps c \to c'$ in $\C$ making the following diagram commute:
\[\begin{tikzcd}
	& c \\
	a && b \\
	& {c'}
	\arrow["f", from=2-1, to=1-2]
	\arrow["g"', from=2-3, to=1-2]
	\arrow["{g'}", from=2-3, to=3-2]
	\arrow["{f'}"', from=2-1, to=3-2]
	\arrow["h"{description}, from=1-2, to=3-2]
\end{tikzcd}.\]

Structured cospans extend the concept of a cospan to the setting where the apex of the cospan has more structure than the feet. Thus, the feet of the cospan will come from a category $\C$ while the apex will belong to a different category $\D$, which is related to $\C$ by a functor $L \maps \C \to \cat{D}$.

\begin{definition}
  Given categories $\C$ and $\cat{D}$, where $\cat{D}$ has all pushouts, along with a functor $L \maps \C \to \cat{D}$, the \emph{$L$-structured cospan category} $_{L}\cospan(\cat{D})$ has the same objects as $\C$ and has as morphisms from $a$ to $b$, the isomorphism classes of cospans $L(a) \to x \ot L(b)$, where $a,b \in \C$ and $x \in \cat{D}$. As before, morphisms are composed by pushout.
\end{definition}

Most combinatorial examples of structured cospans, including many examples in
\cite{baez_structured_2020}, can be constructed using categories of {\acset}s
and data migration functors. A common pattern, illustrated by the following
examples, takes the category $\cat{D}$ to be an arbitrary category of acsets on
a schema $S$ and takes $\cat{C}$ to be the category of acsets on a schema
comprising a chosen object $c$ of $S$ along with any data attributes of $c$ in $S$, under the assumption that $c$ has no outgoing morphisms in $S$. The functor $L: \cat{C} \to \cat{D}$ is then the left adjoint to the forgetful functor $\cat{D} \to \cat{C}$.

\begin{example}
  Consider the inclusion of the terminal category $\cat{1}$ into the schema for weighted graphs that sends the unique object of $\cat{1}$ to $V$. Pullback along this inclusion is the forgetful functor that sends a weighted graph to its set of vertices, and the left adjoint $L$ to this forgetful functor sends a set to the \emph{discrete weighted graph} having that set as vertices and no edges. The $L$-structured cospans are open weighted graphs, which are composed by gluing together vertices.
\end{example}

\begin{example}
  Returning to the whole-grained Petri nets of \cref{ex:petri_nets}, consider the inclusion of the terminal category $\cat{1}$ into the ``SITOS schema'' (\cref{fig:petri_net_vanilla}) that sends the unique object of $\cat{1}$ to $S$. Pullback along this inclusion sends a Petri net to its set of species, and the left adjoint sends a set to the Petri net having that set as species and no transitions. Structured cospans with respect to this functor are open, whole-grained Petri nets. Structured cospans of Petri nets with rates, where each transition has an associated rate constant, can be handled similarly.
\end{example}

Because left adjoints preserve colimits, the conditions needed to define categories of structured cospans are automatically satisfied. The fact that so many common examples of structured cospans are in fact {\acset}s with data migration functors enables a uniform implementation of these examples.

\subsection{Limits and Colimits} \label{subsection:limits_and_colimits}

In this section we give explicit formulas for computing limits and colimits in categories of {\acset}s. Since, by the main theorem \ref{thm:main_theorem}, the category $\acsetC_{K}^{S}$ is a slice category in the category of $\C$-sets, the limit and colimit formulas may be derived from the corresponding formulas for $\C$-sets, reviewed in \cref{subsection:category-c-sets}, and for slice categories, reviewed here.

\begin{proposition}
  For any category $\D$ and object $d \in \D$, the forgetful functor $\Pi: \D/d \to \D$ creates colimits in the slice category $\D/d$. In particular, if $\D$ has colimits of shape $\cat{J}$, then for any diagram $F \maps \cat{J} \to \D/d$, we have
  $\Pi(\colim_{\cat{J}} F) \cong \colim_{\cat{J}} F \cmp \Pi$.
\end{proposition}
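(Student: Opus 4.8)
The plan is to verify directly the defining property of creating colimits: given a diagram $F \maps \cat{J} \to \D/d$ whose underlying diagram $F \cmp \Pi$ admits a colimit in $\D$, I will show that this colimit lifts uniquely to a cocone in $\D/d$ and that the lift is itself a colimit there. Throughout I write an object of $\D/d$ as a pair $(x,f)$ with $f \maps x \to d$, and a morphism $(x,f) \to (y,g)$ as an arrow $h \maps x \to y$ satisfying $g \circ h = f$; the functor $\Pi$ forgets the structure map. Writing $F(j) = (x_j, f_j)$, the key initial observation is that the structure maps $f_j \maps x_j \to d$ form a cocone from $F \cmp \Pi$ to $d$: for each $u \maps j \to j'$ in $\cat{J}$, the fact that $F(u)$ is a morphism in the slice says precisely that $f_{j'} \circ (F \cmp \Pi)(u) = f_j$, which is exactly the cocone compatibility condition.

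First I would fix a colimit $(c, (\lambda_j)_j)$ of $F \cmp \Pi$ in $\D$, which exists by hypothesis. Applying the universal property of this colimit to the cocone $(f_j)_j$ just described produces a unique morphism $\bar f \maps c \to d$ with $\bar f \circ \lambda_j = f_j$ for every $j$. This equation says exactly that each $\lambda_j$ becomes a morphism $F(j) \to (c, \bar f)$ in $\D/d$, so $(c,\bar f)$ together with the $\lambda_j$ is a cocone in the slice lying strictly over the chosen colimit cocone in $\D$. Moreover this lift is the only one: any structure map $f'$ making the $\lambda_j$ into slice morphisms into $(c, f')$ would force $f' \circ \lambda_j = f_j$ for all $j$, and the uniqueness clause of the colimit in $\D$ then gives $f' = \bar f$.

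Next I would check that $(c, \bar f)$ is in fact a colimit in $\D/d$. Given any competing cocone consisting of an object $(z, g)$ and slice morphisms $\mu_j \maps F(j) \to (z,g)$, the underlying arrows $\mu_j \maps x_j \to z$ form a cocone from $F \cmp \Pi$ to $z$, so there is a unique mediating morphism $\phi \maps c \to z$ in $\D$ with $\phi \circ \lambda_j = \mu_j$. The only point needing care is that $\phi$ respects the slice structure, i.e.\ that $g \circ \phi = \bar f$; this is the crux of the argument. It follows from the uniqueness half of the universal property of $c$: both $g \circ \phi$ and $\bar f$ satisfy $(-) \circ \lambda_j = f_j$ for all $j$, since $g \circ \phi \circ \lambda_j = g \circ \mu_j = f_j$ by the assumption that $\mu_j$ is a slice morphism, while $\bar f \circ \lambda_j = f_j$ by construction. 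Hence $g \circ \phi = \bar f$, so $\phi$ is a morphism $(c,\bar f) \to (z,g)$ in $\D/d$, and its uniqueness as such is inherited from its uniqueness in $\D$.

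Together these steps show that $\Pi$ creates colimits of shape $\cat{J}$: every colimit cocone downstairs lifts uniquely, and the lift is a colimit upstairs. In particular, creation implies preservation, so whenever $\D$ has colimits of shape $\cat{J}$ the functor $\Pi$ carries the colimit of $F$ to the colimit of $F \cmp \Pi$, which is the asserted isomorphism $\Pi(\colim_{\cat{J}} F) \cong \colim_{\cat{J}} F \cmp \Pi$. I do not expect any genuine obstacle here; the entire content is the compatibility check $g \circ \phi = \bar f$, and that is dispatched by the uniqueness part of the universal property rather than by any explicit construction.
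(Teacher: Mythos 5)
Your proof is correct and complete. The paper does not prove this proposition itself but instead cites Riehl (Proposition 3.3.8, the dual statement) for the standard argument; your write-up is exactly that argument, dualized to colimits and carried out in full --- lifting the colimit cocone via the unique induced map $\bar f \maps c \to d$, verifying uniqueness of the lift, and using the uniqueness half of the universal property to check that mediating morphisms respect the slice structure.
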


For a proof (of the dual statement), see \cite[Proposition 3.3.8]{riehl_category_2016}. In conjunction with the colimit formula for $\C$-sets, we deduce:

\begin{corollary}
  The category $\acsetC_{K}^{S}$ has all finite colimits, which are computed pointwise. To be precise, if $F \maps J \to \acsetC_{K}^{S}$ is a finite diagram of {\acset}s on the schema $S$, then for each object $c \in S_{0}$, we have $\evalat_{c}(\colim_{J} F) = \colim_{J} F \cmp \evalat_{c}$, where the evaluation functor $\evalat_{c} \maps \acsetC_{K}^{S} \to \setC$ gives the set associated with the object $c$.
\end{corollary}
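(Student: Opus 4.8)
The plan is to chain together the two results already in hand: the slice-category characterization of \cref{thm:main_theorem} and the fact, recorded in the proposition just above, that the forgetful functor out of a slice creates colimits. First I would invoke \cref{thm:main_theorem} to replace $\acsetC_{K}^{S}$ by the isomorphic category $\setC^{\C}/D$, where $\C = S_{0}$ and $D = \Ran_{\tilde{p}}\tilde{K}$ is the $\C$-set produced in the proof of that theorem. Under this isomorphism an {\acset} $G$ corresponds to a pair $(F,\hat{\alpha})$ with $F = G|_{S_{0}}$ and $\hat{\alpha} \maps F \tto D$, so the evaluation functor factors as
\[
  \evalat_{c} \maps \acsetC_{K}^{S} \cong \setC^{\C}/D \xrightarrow{\;\Pi\;} \setC^{\C} \xrightarrow{\;\mathrm{ev}_{c}\;} \setC,
\]
where $\Pi$ is the forgetful functor and $\mathrm{ev}_{c}$ is evaluation at $c \in \C$; indeed $\mathrm{ev}_{c}(\Pi(F,\hat{\alpha})) = F(c) = G(c) = \evalat_{c}(G)$.

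With this factorization in hand, the colimit formula follows by composing preservation results. Since $\finSetC$ has all finite colimits, \cref{prop:pointwise-limits} shows that $\setC^{\C}$ has all finite colimits and that each $\mathrm{ev}_{c}$ preserves them. The preceding proposition shows that $\Pi \maps \setC^{\C}/D \to \setC^{\C}$ creates, and in particular preserves, colimits of any shape for which $\setC^{\C}$ is cocomplete. Hence, for a finite diagram $F \maps J \to \acsetC_{K}^{S}$, the colimit exists and
\begin{align*}
  \evalat_{c}(\colim_{J} F) &= \mathrm{ev}_{c}(\Pi(\colim_{J} F)) \cong \mathrm{ev}_{c}(\colim_{J} F \cmp \Pi) \\
  &\cong \colim_{J} F \cmp \Pi \cmp \mathrm{ev}_{c} = \colim_{J} F \cmp \evalat_{c},
\end{align*}
which is exactly the asserted pointwise formula.

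The one point demanding care, rather than an outright obstacle, is finiteness: I must check that the colimit of a diagram of \emph{finite} {\acset}s is again finite. This follows because the pointwise value $\evalat_{c}(\colim_{J} F) \cong \colim_{J} F \cmp \evalat_{c}$ is a finite colimit in $\setC$ of finite sets---the diagram $F$ is finite and each $\evalat_{c}(F(j))$ is finite by hypothesis---hence finite; meanwhile the attribute part of every object is pinned to $K$ by the morphism condition and so is untouched by the colimit. I expect the mild subtlety to be that the ambient $\C$-set $D = \Ran_{\tilde{p}}\tilde{K}$ may itself be infinite, so one cannot simply work in $\finSetC^{\C}/D$ from the outset; instead one computes in $\setC^{\C}/D$ throughout and observes only at the end that the resulting colimit has finite components over $S_{0}$, placing it in the full subcategory of finite {\acset}s.
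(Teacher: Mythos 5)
Your proposal is correct and follows essentially the same route as the paper: the corollary is deduced by combining the slice-category identification $\acsetC_{K}^{S} \cong \setC^{\C}/D$ from the main theorem with the proposition that the forgetful functor out of a slice creates colimits, and then applying the pointwise colimit formula for functor categories. Your additional check that colimits of finite acsets remain finite is harmless extra detail (the corollary as stated concerns colimits of finite diagrams in $\acsetC_{K}^{S}$, not finiteness of the acsets themselves), and the passing attribution of finite cocompleteness of $\setC^{\C}$ to $\finSetC$ rather than $\setC$ is a slip of no consequence.
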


Limits in slice categories are less straightforward: they can be reduced to limits in the base category, but only by enlarging the diagram. Given a diagram $F: \cat{J} \to \D/d$ in a slice category, construct a diagram $\bar F: \cat{J}_\bullet \to \D$ in the base category as follows. Let $\cat{J}_\bullet$ be the category given by freely adjoining a terminal object to $\cat{J}$, i.e., a new object $\bullet$ and for each object $j$ in $\cat{J}$, a unique morphism $j_\bullet: j \to \bullet$. Then define $\bar F: J_\bullet \to \D$ by setting $\bar F(\bullet) = d$, $\bar F(j_\bullet) = F(j)$ for each $j \in \cat{J}$, and $\bar F|_{\cat{J}} = F \cmp \Pi$, where $\Pi: \D/d \to \D$ is the forgetful functor. The following result appears in the literature as, for example, \cite[Theorem 17.2]{wyler1991}.

\begin{proposition}
  A diagram $F: \cat{J} \to \D/d$ has a limit in the slice category $\D/d$ if and only if the enlarged diagram $\bar F: \cat{J}_\bullet \to \D$ has a limit in $\D$, in which case the limit cone in $\D$ uniquely lifts to a limit cone in $\D/d$.
\end{proposition}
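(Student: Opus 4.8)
The plan is to prove the slightly stronger statement that the category of cones over $F$ in $\D/d$ is isomorphic to the category of cones over $\bar F$ in $\D$, and then to invoke the fact that a limit is precisely a terminal object in the relevant cone category. Since an isomorphism of categories transports terminal objects in both directions, the existence of a limit for $F$ and the existence of a limit for $\bar F$ become equivalent, and the limiting cones correspond under the isomorphism; this correspondence is exactly the asserted unique lifting of the limit cone.

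First I would unpack the two notions of cone. Writing each slice object as $F(j) = (A_j, p_j \maps A_j \to d)$, a cone over $F$ with apex $(L, \ell \maps L \to d)$ is a family of maps $\lambda_j \maps L \to A_j$ in $\D$ satisfying $p_j \circ \lambda_j = \ell$ (so that each $\lambda_j$ lies over $d$), together with the usual compatibility $F(\phi) \circ \lambda_j = \lambda_k$ for every $\phi \maps j \to k$ in $\cat{J}$. On the other side, a cone over $\bar F$ with apex $L \in \D$ consists of a family $\mu_j \maps L \to A_j$ for $j \in \cat{J}$ and a single extra leg $\mu_\bullet \maps L \to d$, subject to compatibility with all arrows of $\cat{J}_\bullet$. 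The arrows inherited from $\cat{J}$ impose $F(\phi) \circ \mu_j = \mu_k$, while each freely adjoined arrow $j_\bullet \maps j \to \bullet$ imposes $\bar F(j_\bullet) \circ \mu_j = \mu_\bullet$, i.e.\ $p_j \circ \mu_j = \mu_\bullet$.

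Next I would read off the bijection: identifying $\ell$ with $\mu_\bullet$, the constraints $p_j \circ \lambda_j = \ell$ are exactly the constraints coming from the arrows $j_\bullet$, and the $\cat{J}$-compatibility conditions coincide verbatim. Thus cones over $F$ in $\D/d$ with apex $(L,\ell)$ are in natural bijection with cones over $\bar F$ in $\D$ with apex $L$, the map to $d$ being recorded by the extra leg rather than by the apex. I would then check that this bijection is functorial: a morphism of $\bar F$-cones $h \maps L \to L'$ satisfies $\mu'_j \circ h = \mu_j$ and $\mu'_\bullet \circ h = \mu_\bullet$; the latter says precisely that $h$ is a morphism $(L, \ell) \to (L', \ell')$ in $\D/d$, while the former are the cone-morphism conditions there. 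Hence the two cone categories are isomorphic, and taking terminal objects on each side yields the equivalence of the two limit-existence statements together with the canonical lift of the limit cone.

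The calculations are routine; the only points demanding care are the bookkeeping and one edge case, and these are where I expect the mild friction rather than a real obstacle. The main thing to verify cleanly is that the extra leg $\mu_\bullet$ is genuinely primitive data of a $\bar F$-cone rather than derived, so that the correspondence is exact even when $\cat{J}$ is empty. When $\cat{J} = \emptyset$, the category $\cat{J}_\bullet$ is the terminal category picking out $d$, its limit is $d$ itself with leg $1_d$, and this matches the terminal object $(d, 1_d)$ of $\D/d$, so the statement holds in this degenerate case as well. (For nonempty $\cat{J}$ the condition $\mu'_\bullet \circ h = \mu_\bullet$ is in fact redundant, being forced through any single $p_j$ by $\mu'_j \circ h = \mu_j$, but it is cleaner not to rely on this.) Beyond disciplined tracking of which conditions arise from which arrows of $\cat{J}_\bullet$, I anticipate no serious difficulty.
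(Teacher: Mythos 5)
Your proof is correct and complete: the isomorphism between the category of cones over $F$ in $\D/d$ and the category of cones over $\bar F$ in $\D$, followed by transport of terminal objects, delivers both the equivalence of existence and the unique lifting, and your attention to the empty-diagram case and to the (non-)redundancy of the $\mu_\bullet$ condition is exactly the right bookkeeping. Note that the paper itself gives no proof of this proposition --- it defers to the literature (Wyler, Theorem 17.2) --- and your argument is precisely the standard one that the cited reference supplies; the paper's construction of $\bar F$ (with $\bar F(\bullet) = d$ and $\bar F(j_\bullet)$ the structure map of $F(j)$) is set up exactly so that your correspondence of cones works.
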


\begin{corollary}
  The category $\acsetC^{S}_{K}$ has all finite limits, which may be reduced to limits in $\setC^\C$ by the above procedure.
\end{corollary}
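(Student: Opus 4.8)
The plan is to combine the main theorem with the two propositions stated just above. First I would invoke \cref{thm:main_theorem} to replace $\acsetC^{S}_{K}$ by the slice category $\setC^{\C}/D$, where $\C = S_0$ and $D$ is the $\C$-set produced by the right Kan extension in the proof of that theorem. Since the two categories are isomorphic, it suffices to exhibit all finite limits in $\setC^{\C}/D$ and to verify that they arise from the enlargement procedure described before the previous proposition.

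Next I would take an arbitrary finite diagram $F \maps \cat{J} \to \setC^{\C}/D$ and form the enlarged diagram $\bar F \maps \cat{J}_\bullet \to \setC^{\C}$ exactly as prescribed, with $\bar F(\bullet) = D$. The key observation is that freely adjoining a terminal object to a finite category $\cat{J}$ yields a category $\cat{J}_\bullet$ that is again finite: it has one extra object $\bullet$ and, for each $j \in \cat{J}$, one extra morphism $j_\bullet \maps j \to \bullet$. Because $\setC$ is finitely complete, \cref{prop:pointwise-limits} guarantees that $\setC^{\C}$ has all finite limits, computed pointwise in $\setC$; in particular $\bar F$ has a limit in $\setC^{\C}$.

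Finally, I would apply the slice-limit proposition, which asserts that the limit cone over $\bar F$ in $\setC^{\C}$ lifts uniquely to a limit cone over $F$ in $\setC^{\C}/D$. Transporting back across the isomorphism of \cref{thm:main_theorem}, this shows that every finite diagram in $\acsetC^{S}_{K}$ has a limit, and that the limit is obtained by enlarging the diagram and computing a pointwise limit in $\setC^{\C}$ --- which is precisely the reduction asserted in the statement.

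Since the argument is just a chain of three established results, there is no genuine obstacle; the only point requiring care is bookkeeping about finiteness. I would make explicit that $\cat{J} \mapsto \cat{J}_\bullet$ preserves finiteness, and that the pointwise limits in $\setC^{\C}$ may be taken in $\finSetC$, so that the resulting {\acset} is again finite and the reduction genuinely lands in the finite setting implicit in the statement.
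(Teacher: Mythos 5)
Your proposal is correct and follows essentially the same route the paper intends: the corollary is an immediate combination of \cref{thm:main_theorem}, the slice-limit proposition via the enlarged diagram $\bar F \maps \cat{J}_\bullet \to \setC^{\C}$, and the existence of pointwise finite limits in $\setC^{\C}$ from \cref{prop:pointwise-limits}. Your extra bookkeeping that $\cat{J}_\bullet$ remains finite and that the pointwise limits can be taken in $\finSetC$ is a welcome precision, not a deviation.
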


For example, the product of two {\acset}s $(F_{1},\beta_{1})$ and $(F_{2}, \beta_{2})$ in $\setC^{\C}/D \cong \acsetC^{S}_{K}$ is computed as a pullback in $\setC^{\C}$:
\[
  \begin{tikzcd}
    & F_{1} \by_{D} F_{2} \ar[rd] \ar[ld] \\
    F_{1} \ar[rd,"\beta_{1}",swap] & & F_{2} \ar[ld,"\beta_{2}"] \\
    & D
  \end{tikzcd}.
\]

\section{Implementation of Attributed \texorpdfstring{$\C$}{C}-sets} \label{section:implementation}

In this section we sketch the more interesting aspects of the implementation of attributed $\C$-sets, especially those that take advantage of unique features of Julia or Catlab.

\subsection{The \texorpdfstring{\jul{AttributedCSet}}{AttributedCSet} Data Structure}

At the core of Catlab is a system to specify and manipulate generalized algebraic theories (GATs) and presentations of their models \cite{cartmell_generalised_1986}, which uses Julia's LISP-style metaprogramming capabilities. Using the \jul{@theory} macro, we can define the theory of a schema to be the theory of a category extended with two new GAT types, for attribute data (\jul{AttrType}) and data attributes (\jul{Attr}). We also declare a syntax system for schemas using the \jul{@syntax} macro, which generates new Julia types to be used in symbolic expressions and in schemas presented by generators and relations.

\begin{minted}{julia}
@theory Schema{Ob,Hom,AttrType,Attr} <: Category{Ob,Hom} begin
  Data::TYPE
  Attr(dom::Ob,codom::AttrType)::TYPE

  compose(f::Hom(A,B), g::Attr(B,X))::Attr(A,X) ⊣ (A::Ob, B::Ob, X::AttrType)

  (compose(f, compose(g, a)) == compose(compose(f, g), a)
    ⊣ (A::Ob, B::Ob, C::Ob, X::AttrType, f::Hom(A,B), g::Hom(B,C), a::Attr(C, X)))
  compose(id(A), a) == a ⊣ (A::Ob, X::Ob, a::Attr(A,X))
end

@syntax FreeSchema{ObExpr,HomExpr,DataExpr,AttrExpr} Schema begin
end
\end{minted}
We can now give finite presentations of schemas using the \jul{@present} macro, as we saw in \cref{section:practice}. For example, the schema for weighted graphs is:
\begin{minted}{julia}
@present TheoryWeightedGraph(FreeSchema) begin
  (V,E)::Ob
  (src,tgt)::Hom(E,V)
  X::AttrType
  dec::Attr(E,X)
end
\end{minted}

The final step in defining an acset is to create the Julia type for the data structure.
\begin{minted}{julia}
@acset_type WeightedGraph(TheoryWeightedGraph, index=[:src,:tgt])
\end{minted}
Inspecting the resulting type \jul{WeightedGraph}, we see that a struct has been generated, suitable for storing the data of a weighted graph.
\begin{minted}{julia}
> dump(WeightedGraph{Int})
WeightedGraph{Int64} <: StructACSet{
    Catlab.Theories.SchemaDescType{
      (:V, :E), (:src, :tgt), (:X,), (:dec,),
      (src = 2, tgt = 2, dec = 2), (src = 1, tgt = 1, dec = 1)
    },
    Tuple{Int64},
    (src = true, tgt = true, dec = false),
    (src = false, tgt = false, dec = false)
  }
  obs::StaticArrays.MVector{2, Int64}
  homs::NamedTuple{(:src, :tgt), Tuple{Vector{Int64}, Vector{Int64}}}
  attrs::NamedTuple{(:dec,), Tuple{Vector{Int64}}}
  hom_indices::NamedTuple{(:src, :tgt), Tuple{Vector{Vector{Int64}}, Vector{Vector{Int64}}}}
  hom_unique_indices::NamedTuple{(), Tuple{}}
  attr_indices::NamedTuple{(), Tuple{}}
  attr_unique_indices::NamedTuple{(), Tuple{}}
\end{minted}
This struct subtypes the abstract type \jul{StructACSet}, with parameters that describe the schema of a WeightedGraph. This allows us to write functions that accept a \jul{StructACSet} and dispatch on the type parameters of \jul{StructACSet} to generate custom code for accessing our weighted graph. We discuss this more in the next section.

\subsection{Code Generation}
\label{subsection:code-generation}

If every low-level operation on an acset had to process the type-level description of the schema, the resulting overhead would make it impossible for acsets to compete with handwritten data structures. Fortunately, it is possible to avoid this runtime penalty using \emph{generated functions}, a useful metaprogramming feature supported by the Julia language. Like a macro, a generated function runs arbitrary Julia code to generate a Julia expression that is then evaluated. However, while a macro operates on the \emph{syntax} of its arguments, a generated function operates on the \emph{types} of its arguments and returns a Julia expression for the body of the function that is specific to those types. Because the type of an {\acset} fully describes its structure, this type contains enough information to generate specialized, fast functions for each operation. After the first run, this specialized code is cached, so that subsequent applications of the function need not generate the code again.

It is best to illustrate the usefulness of generated functions with an example that does not involve {\acset}s, as the types of {\acset}s are rather complicated. Instead, we use the classic example of unrolling the loop in a map operation on vectors of fixed length. In the following code, the type \jul{StaticVector{n,T}} represents a vector of length \jul{n} with element type \jul{T}.
\begin{minted}{julia}
@generated function map(f::Function, v::StaticVector{n,T}) where {n,T}
  if n == 0; error("Type inference for empty vectors not implemented") end
  quote
    SVector{$n}($([:(f(v[$i])) for i in 1:n]...))
  end
end
\end{minted}
When we call the function \jul{map} on an argument of type, say, \jul{StaticVector{3,Int}}, it is as if \jul{map} were defined as:
\begin{minted}{julia}
function map(f::Function, v::StaticVector{3,Int})
  SVector{3}(f(v[1]), f(v[2]), f(v[3]))
end
\end{minted}
In a proper implementation of statically-sized vectors, such as in \texttt{StaticArrays.jl}, much of the performance boost comes from static vectors being allocated on the stack rather than the heap. The loop unrolling would then enable generically sized static vectors to be competitive with code special-cased for particular sizes. Thus, for example, Euclidean geometry packages need not treat the two-dimensional and three-dimensional cases specially for the sake of performance.

Similarly, as the benchmarks below show, generated functions enable a generic implementation of {\acset}s to be competitive with special-cased implementations of specific {\acset}s. Not only does this obviate the need for a great deal of domain-specific libraries, it opens the door to specialized data structures that would previously have required too much effort to be worth writing. In particular, it removes the need for packages to use dictionaries to support arbitrary, user-defined data attributes, such as in \texttt{MetaGraphs.jl}; instead, the user can simply create a new, specialized data structure that has precisely the fields needed for the application at hand.

Almost all basic operations on {\acset}s are implemented as generated functions. This yields a greater savings of programmer effort than it may initially appear due to the support for \emph{indexing}, where both forward and the inverse image maps are stored to enable fast lookups. Operations such as adding and removing elements and changing the values of morphisms interact with the indices in subtle ways. By writing the accessors and mutators using generated functions, this bookkeeping can be handled once and for all.

\subsection{Low-level Operations}

We now demonstrate how to write high-performance algorithms using the low-level interface to {\acset}s. As an example, we implement depth first search on a graph. The following code searches a graph \texttt{g} in a depth-first manner, starting from a vertex \texttt{s}. It returns the array of parent vertices from the search, indexed by vertex.

\begin{minted}{julia}
function dfs_parents(g::Graph, s::Int)
  n = nparts(g, :V) # number of vertices in the graph
  parents = zeros(Int, n)
  seen = zeros(Bool, n)
  S = [s]
  seen[s] = true
  parents[s] = s
  while !isempty(S)
    v = S[end]
    u = 0
    outedges = incident(g, v, :src) # all edges with source vertex v
    outneighbors = subpart(g, outedges, :tgt) # all vertices with edge from v
    for n in outneighbors
      if !seen[n]
        u = n
        break
      end
    end
    if u == 0
      pop!(S)
    else
      seen[u] = true
      push!(S, u)
      parents[u] = v
    end
  end
  return parents
end
\end{minted}

In this code, the call to \texttt{incident} returns the list of edges outgoing from a given vertex \texttt{v}, which uses the index for the morphism \texttt{src} maintained by the {\acset}. The subsequent call to \texttt{subpart} gives the list of vertices with an incoming edge from \texttt{v}. This pattern of data access is repeated in a loop. Relational databases that only provide high-level, query-based access tend to perform poorly on graph algorithms, including many search algorithms, that are highly iterative or recursive \cite{cheng2019benchmarks}.

\subsection{Categorical Operations}

One use for {\acset}s is to supply the ``structure'' in structured cospans, as explained in \cref{subsection:structured_cospans}. To support this application, we must be able to compute pushouts. In this section, we give a brief overview of how pushouts of {\acset}s are computed, in the special case of coequalizers.

Recall from \cref{subsection:limits_and_colimits} that colimits of {\acset}s are computed pointwise. Thus, before we discuss how to compute coequalizers in a category of {\acset}s, we first discuss how to compute a coequalizer in the category of finite sets (cf. \cite[\textsection 4.6]{rydeheard1988}). For the purposes of this discussion, we will use the following data structures for finite sets of form $\{1,\dots,n\}$ and functions between them. Note that this code is significantly simpler and less generic than what is implemented in Catlab.
\begin{minted}{julia}
struct FinSet
  n::Int
end
struct FinFunction
  dom::FinSet
  codom::FinSet
  values::Vector{Int}
  function FinFunction(values::Vector{Int}, codom::Int)
    @assert all(1 <= v <= codom for v in values)
    new(FinSet(length(values)), FinSet(codom), values)
  end
end
\end{minted}
We want to take the coequalizer of a parallel pair of morphisms, given by the following type.
\begin{minted}{julia}
struct ParallelPair
  f::FinFunction
  g::FinFunction
  function ParallelPair(f::FinFunction, g::FinFunction)
    @assert f.dom == g.dom && f.codom == g.codom
    new(f,g)
  end
end
\end{minted}
Given a parallel pair $f,g \maps A \to B$, we must compute the projection map $p \maps B \to C$ onto the object $C$ comprising the coequalizer. The algorithmic idea is to utilize the \textit{union-find} or \textit{disjoint-sets} data structure, which stores an equivalence relation on $B$, and then join the equivalence classes of $f(a)$ and $g(a)$ for all $a \in A$. In the code below, the \jul{IntDisjointSets} data structure stores an equivalence relation on the set of integers $\set{1,\dots,n}$, and the function \jul{union!} efficiently merges two equivalence classes.
\begin{minted}{julia}
function colimit(pair::ParallelPair)
  f, g = pair.f, pair.g
  m, n = f.dom.n, f.codom.n
  sets = IntDisjointSets(n)
  for i in 1:m
    union!(sets, f(i), g(i))
  end
  # Extract the map out of the IntDisjointSets data structure.
  quotient_projection(sets)
end
\end{minted}
For example, the connected components of a graph can be extracted from the coequalizer of the source and target maps $V \to E$. To compute a coequalizer of {\acset} morphisms, one must first have a Julia data structure for {\acset} morphisms. This amounts to a named tuple of morphisms in $\finSetC$, one for each object in the schema. Next, given a parallel pair of {\acset} morphisms, one applies the above colimit function for each object, and then follows the proof that colimits are computed pointwise to construct the colimit {\acset}. Note that this also requires implementing the universal property of the colimit. The details can be found in the Catlab source code.

\subsection{Benchmarks}

In the \cref{subsection:code-generation}, we claimed that the use of generated functions made {\acset}s competitive with hand-written data structures. In this section we provide some evidence for that claim by benchmarking against
\texttt{LightGraphs.jl}, a state-of-the-art Julia package for graphs \cite{bromberger_lightgraphsjl_2017}. LightGraphs boasts performance competitive with graph libraries written in C++ and vastly superior to the popular Python graph library NetworkX \cite{lin_benchmark_2020}.

The results of the benchmarks are shown in \cref{fig:benchmarks}, normalized by the time of the state of the art in Julia (LightGraphs/MetaGraphs). Code to generate these benchmarks is available \href{https://github.com/AlgebraicJulia/Catlab.jl/tree/struct-acsets-benchmarks/benchmark}{on GitHub}. Benchmark results that beat the state of the art are in green, and benchmarks that are worse than 2x the state of the art are in red. In ``Graph'' and ``SymmetricGraph'', we benchmark the operations of checking whether a graph has an edge, iterating through the edges of a graph, iterating through the neighbors of a vertex, and constructing a path graph, for directed and symmetric graphs, respectively. Our data structure for symmetric graphs is intrinsically worse than an ``undirected'' graph data structure, so we cannot expect to be as fast in the symmetric graph benchmarks, unfortunately. In ``GraphConnComponents'' and ``SymmetricGraphConnComponents'', we compute the connected components of path graphs, complete graphs, star graphs, and the Tutte graph. In ``WeightedGraph'' and ``LabeledGraph'', we modify and iterate over the weights and labels of weighted and labeled graphs, respectively. Finally, in ``RandomGraph'', we construct random graphs having various distributions, and in ``Searching'' we traverse the random graphs that we generated previously.

\begin{figure}
\begin{center}
\begin{tabular}{|l|l|r|}
  \hline
  Category & Benchmark & Normalized Time \\ \hline
  Graph & iter-neighbors & 1.37 \\
           & iter-edges & {\color{green} 0.31} \\
           & make-path & {\color{green} 0.31} \\
           & has-edge & {\color{green} 0.54} \\ \hline
  SymmetricGraph & iter-neighbors & {\color{red} 6.19} \\
           & iter-edges & {\color{green} 0.40} \\
           & make-path & {\color{red} 9.53} \\
           & has-edge & {\color{green} 0.78} \\ \hline
  GraphConnComponents & path-graph & {\color{green} 0.43} \\
           & complete100 & {\color{green} 0.04} \\
           & path500 & {\color{green} 0.38} \\
           & star-graph & 1.02 \\ \hline
  SymmetricGraphConnComponents & path-graph-components & 1.11 \\
           & star-graph-components & {\color{green} 0.45} \\
           & complete100 & 1.38 \\
           & path500 & {\color{green} 0.82} \\
           & tutte & 1.21 \\ \hline
  LabeledGraph & indexed-lookup & {\color{green} 0.67} \\
           & make-discrete & {\color{green} 0.72} \\
           & iter-labels & {\color{green} 0.01} \\
           & make-discrete-indexed & {\color{green} 0.45} \\ \hline
  WeightedGraph & sum-weights & {\color{green} 0.001} \\
           & increment-weights & {\color{green} 0.0001} \\ \hline
  RandomGraph & expected\_degree\_graph-10000-10 & 1.55 \\
           & watts\_strogatz-10000-10 & 1.87 \\
           & erdos\_renyi-10000-0.001 & 1.10 \\ \hline
  Searching & dfs\_erdos\_renyi-10000-0.001 & 1.13 \\
           & bfs\_erdos\_renyi-10000-0.001 & 1.25 \\ \hline
\end{tabular}
\end{center}

\caption{Performance Benchmarks for graph algorithms}
\label{fig:benchmarks}
\end{figure}

The benchmarks show that without too much optimization effort, and with no graph-specific code in the {\acset} core, {\acset}s achieve a performance comparable to LightGraphs, often within a factor of two. However, when we compare with \texttt{MetaGraphs.jl}, a commonly used package for attaching data attributes to a LightGraphs graph, we find that {\acset}s provide very large speedups. This is possible because, as we have seen, our implementation of {\acset}s can generate a graph data structure specialized to any particular pattern of vertex and edge attributes. On the other hand, MetaGraphs handles all cases at once by attaching dictionaries to each vertex and  each edge, which results in an inefficient layout of the data in memory.

We conclude from these benchmarks that it is reasonable to use {\acset}s for performance-sensitive tasks in scientific computing. Although further performance gains are surely possible and may be the subject of future work, the system is already usable for most in-memory workloads. Indeed, that {\acset}s achieve fairly good performance in situations where there are viable alternatives (e.g., LightGraphs or DataFrames), as well as in many situations where there are no reasonable alternatives at all, gives us high hopes for the future of this categorical approach to data structures.

\section{Summary and Outlook}

In this paper, we have laid the theoretical and computational groundwork for the use of {\acset}s as a practical data structure for technical computing. The advantages of this approach are threefold. {\Acset}s provide a unifying abstraction for many existing data structures, including graphs and data frames; {\acset}s enable the rapid development of new data structures for relational data; and the category theory underlying {\acset}s is well understood, enabling the implementation of powerful, general operations such as limits, colimits, and functorial data migration.

Many directions for future work remain. One of the most obvious is replacing the category $\setC$ with other categories admitting useful computational representations. For instance, we could replace $\setC$ with $\cat{Par}$, the category of sets and partial maps, which would enable graphs with ``dangling edges'' (edges which may have undefined sources or targets) and, more generally, databases with ``nulls'' (undefined values). The data model based on $\setC$ does not allow nulls, although as a practical matter our implementation allows data in acsets to be temporarily undefined. Other interesting categories besides $\setC$ and $\cat{Par}$ include $\cat{Rel}$, the category of sets and relations; $\cat{Vect}_{\real}$, the category of real vector spaces and linear maps; $\cat{LinRel}_{\real}$, the category of real vector spaces and linear relations; and $\cat{Markov}$, the category of measurable spaces and Markov kernels. For all these large categories, we would work in the skeletization of a finitary subcategory when implementing a data structure, using, for example, the category $\cat{Mat}_{\real}$ of real matrices instead of $\cat{Vect}_{\real}$.

Generalizing in a different direction, we could consider schemas with finite products and product-preserving functors from the schema to $\setC$. The corresponding data structures would generalize from vectors to matrices and higher-order tensors, e.g., the data $F(c_{1} \otimes c_{2} \xrightarrow{f} d)$ would be a $F(c_{1}) \by F(c_{2})$ matrix with values in $F(d)$. This approach would be useful for modeling multivariate observations where both the number of samples and the dimension of each sample are not known in advance, and might be compared with the Python package \texttt{xarray} \cite{hoyer_xarray_2017}. Generalizing further, we could consider schemas with arbitrary finite limits and limit-preserving functors out of them. For example, using pullbacks, one can define a schema whose instances are categories.

All these variants could in principle be implemented in Julia using methods similar to those presented in this paper. However, more theoretical work would be needed to understand the implications for the many categorical constructions supported by {\acset}s.

Another direction for future work would be to improve the symbolic reasoning capabilities around {\acset} mutation. The relations in an {\acset} schema are known to be respected by category-theoretic constructions like limits and colimits but are not automatically verified in user code. For example, when using symmetric graphs (\cref{example:symmetric_graph}), any limit or colimit of symmetric graphs is guaranteed to yield a valid symmetric graph, but it is the user's responsibility to ensure that any direct mutation of the {\acset} data structure preserves the equations governing the edge involution. At least in special circumstances, it should be possible to check whether a given pattern of acset mutation preserves the invariants implied by the relations. This could be done statically, i.e., during the code generation phase, to avoid a runtime penalty. Verification would likely be impractical for arbitrary mutations, but could be carried out for certain basic mutations. Then, provided that all higher-level code used only these basic mutations, the invariants would be guaranteed to hold. For instance, the simple function that adds an edge to a symmetric graph along with its reversal could be checked for correctness. Provided that higher-level code added edges using only this function, the validity of any symmetric graph thus constructed would be guaranteed.

A third direction for future work is creating and manipulating {\acset} schemas using high-level operations. For instance, given a schema $S$, one could construct another schema $S^{\to}$, the ``arrow schema'' on $S$, whose instances are two instances of $S$ together with an {\acset} morphism between them. In principle, it is also possible to take limits and colimits of schemas themselves. Complex data structures could then be constructed compositionally from simpler ones in a principled manner.

To conclude, we might compare the usage of {\acset}s with that of another, highly popular family of parameterized data types: algebraic data types, formalized mathematically as initial algebras of polynomial functors \cite{hamana_gadts_2011}. The {\acset} data model is ubiquitous in its guise as relational databases but rarely implemented as in-memory data structures. On the other hand, many programming languages, especially functional ones, support some form of algebraic data types, yet algebraic data types are quite uncommon in database systems. Given the clear benefits of each paradigm, it is unclear to us why this separation should hold. Our implementation of {\acset}s in Julia, although efficient and usable, would enjoy improved ergonomics if it were a built-in programming language feature; likewise, algebraic data types would be a powerful feature in databases. The two families of data structures are useful in different circumstances: algebraic data types excel at representing syntax trees and other recursive data, while {\acset}s effectively represent graph-like structures and tabular data. Looking forward, we expect to see more implementations of {\acset}s in both programming languages and libraries. We hope that {\acset}s will eventually be recognized as having the same fundamental status as algebraic data types, and will join data frames and graphs as a standard abstraction for data science and scientific computing.

\paragraph{Acknowledgments} The authors thank David Spivak for suggestions regarding the proof of \cref{thm:main_theorem} and Andrew Baas, Kris Brown, Micah Halter, and Sophie Libkind for contributing to the development of AlgebraicJulia and providing useful feedback on the manuscript.

\paragraph{Funding Statement} The authors were supported by DARPA Awards HR00112090067 and HR00111990008, ARO Award W911NF-20-1-0292, and AFOSR Awards FA9550-17-1-0058 and FA9550-20-1-0348. The views and conclusions contained in this document are those of the authors and should not be interpreted as representing the official policies, either expressed or implied, of the Army Research Office or the U.S. Government. The U.S. Government is authorized to reproduce and distribute reprints for Government purposes notwithstanding any copyright notation herein.

\printbibliography[heading=bibintoc]

\end{document}